
\documentclass[11pt]{amsart}

\usepackage{amsfonts}
\usepackage{amsmath}
\usepackage{amssymb}
\usepackage{amsthm}
\usepackage{amsxtra}
\usepackage{epic, eepic}
\usepackage{epsfig,color}
\usepackage{fullpage}
\usepackage{vmargin}
\theoremstyle{plain}

\newtheorem{Thm}{Theorem}[section]

\newtheorem{Lem}[Thm]{Lemma}

\newtheorem*{Claim}{Claim}

\theoremstyle{definition}

\theoremstyle{remark}
\newtheorem{Rem}[Thm]{Remark}

\newtheorem{Ex}[Thm]{Example}

\numberwithin{equation}{section}

\setmarginsrb{33mm}{30mm}{33mm}{40mm}%
            {0mm}{10mm}{0mm}{10mm}

\begin{document}

\newcommand{\rx}{\mathcal{R}(X)}

\title{Arithmetic matroids, Tutte polynomial\\ and toric arrangements}

\author{Michele D'Adderio$^*$}\thanks{$^*$ supported by the Max-Planck-Institut
f\"{u}r Mathematik and the University of
G\"{o}ttingen.}\author{Luca Moci$^{\dag}$}\thanks{$^{\dag}$
supported by Hausdorff Institute (Bonn), Institut Mittag-Leffler (Djursholm), and Dipartimento di Matematica "Guido Castelnuovo" (Roma).}
\address{Georg-August Universit\"{a}t G\"{o}ttingen\\ Mathematisches Institut\\
Bunsenstrasse 3-5, D-37073 G\"{o}ttingen\\
Germany}\email{mdadderio@yahoo.it}

\address{Dipartimento di Matematica "Guido Castelnuovo"\\Sapienza Universit\`a di Roma\\
Piazzale Aldo Moro 5, 00185 Roma\\
Italy}\email{moci@mat.uniroma1.it}

\begin{abstract}
We introduce the notion of an arithmetic matroid, whose main
example is given by a list of elements of a finitely
generated abelian group. In particular, we study the
representability of its dual, providing an extension of the Gale
duality to this setting.

Guided by the geometry of generalized toric arrangements, we
provide a combinatorial interpretation of the associated
arithmetic Tutte polynomial, which can be seen as a generalization
of Crapo's formula for the classical Tutte polynomial.
\end{abstract}

\maketitle

\section*{Introduction}

Who can imagine a simpler object than a finite list of vectors?

Nevertheless, several mathematical constructions arise from such a
list $X$: hyperplane arrangements and zonotopes in geometry, box
splines in numerical analysis, root systems and parking
functions in combinatorics are only some of the most well-known
examples. More recently, Holtz and Ron in \cite{HR}, and Ardila
and Postnikov in \cite{AP} introduced various algebraic structures
capturing a rich description of these objects.

A central role in this framework is played by the combinatorial
notion of \emph{matroid}, which axiomatizes the linear dependence
of the elements of $X$.

If the list $X$ lies in $\mathbb{Z}^n$, an even wider spectrum of
mathematical objects appears. In their recent book
\cite{li}, De Concini and Procesi explored (among other things)
the connection between the \textit{toric arrangement} associated
to such a list and the \emph{vector partition function}. Inspired
by earlier work of Dahmen and Micchelli (\cite{DM2}, \cite{DMi}),
they view this relation as the discrete analogue of the one
between hyperplane arrangement and multivariate spline. This
approach has also surprising applications to the equivariant index
theory (\cite{DPV1}, \cite{DPV2}, \cite{DPV5}). While the spline
and the hyperplane arrangement only depend on the ``linear
algebra'' of $X$, the partition function and the toric arrangement
are also influenced by its ``arithmetics''.

In fact, in order to have effective inductive methods, one needs
to enlarge the picture from $\mathbb{Z}^n$ to its possible
quotients, i.e. finitely generated abelian groups.

In this paper we introduce the notion of an \emph{arithmetic
matroid}: this is going to be a matroid $\mathfrak{M}$ together
with a \textit{multiplicity function} $m$ (see the definition in
Section 1.3). This object axiomatizes both the linear algebra (via
the matroid) and the arithmetics (via the multiplicity function)
of a list of elements in a finitely generated abelian group. When
an arithmetic matroid actually comes from such a list we will say
that it is \emph{representable}.

We introduce also the notion of a \textit{dual} arithmetic
matroid, and show that the dual of a representable matroid is
representable.  We provide an explicit construction that extends
the Gale duality to our setting
 (\textbf{Theorem \ref{thm:dual}}).

To every arithmetic matroid $(\mathfrak{M},m)$ we associate an
\emph{arithmetic Tutte polynomial}
$$
M(x,y):=\sum_{A\subseteq
X}m(A)(x-1)^{rk(X)-rk(A)}(y-1)^{|A|-rk(A)},
$$
where $X$ is the list of vectors of $\mathfrak{M}$. When the
multiplicity function $m$ is trivial (i.e. $m\equiv 1$), this
gives the classical Tutte polynomial of the underlying matroid
$\mathfrak{M}$. When $(\mathfrak{M},m)$ is representable, this is
the polynomial defined in \cite{MoT}, where it is shown to have
several applications to vector partition functions, toric
arrangements and zonotopes (cf. also \cite{DM}).

For representable arithmetic matroids, the positivity of the
coefficients of the associated arithmetic Tutte polynomial was
established in \cite{MoT}, while the understanding of their meaning was left as an open problem.

The main result of this paper is to provide a combinatorial
interpretation of the arithmetic Tutte polynomial of \textit{any}
arithmetic matroid, showing in particular the positivity of its
coefficients (see \textbf{Theorem
\ref{thm:generalcase}}, and Section 5.2-5.3 for related definitions). Our interpretation can be seen
as an extension of the one given by Crapo in \cite{Cr} for the
classical Tutte polynomial (see Section 3.1): in fact, when the
multiplicity function is trivial, we recover exactly Crapo's
formula.

Our combinatorial ideas have their roots in the notion of a
\textit{generalized toric arrangement}, which provides the
geometric inspiration and motivation of our work (see Section 4).

\bigskip

The paper is organized in the following way.

In the first section we give the definition of an arithmetic
matroid, we introduce the notion of the dual and representability,
and provide examples of both representable (Section 1.4) and
non-representable (Section 1.5) arithmetic matroids.

In the second section we prove that the dual of a representable
matroid is representable.

In the third section we introduce the arithmetic Tutte polynomial
and several other basic notions and constructions.

The fourth section provides motivational background: it is an
overview on generalized toric arrangements, which are the
geometric counterpart of the combinatorics developed in this
paper.

This inspires and motivates the definitions given in the fifth
section, in which we introduce the main ingredients of our
construction.

We then provide the combinatorial interpretation of the arithmetic
Tutte polynomial: in the basic case of \textit{molecules} (sixth section), and in the general case (seventh section).

Finally in the last section we make a remark on log-concavity and unimodality.

\subsection*{Acknowledgments}
We are deeply grateful to Petter Br\"{a}nd\'{e}n for having
pointed out a mistake in an earlier choice of the axioms in
Section 1.3, and for having suggested how to fix it. We are
very grateful to Emanuele Delucchi for several stimulating and
insightful discussions. We would also like to thank Anders
Bj\"{o}rner, Corrado De Concini, Alicia Dickenstein, Michael Falk,
Matthias Lenz, Alex Postnikov and Frank Sottile for many
interesting conversations and valuable suggestions.

\section{Arithmetic matroids: definitions and examples}

\subsection{Some notation}

We will use the word \emph{list} as a synonymous of multiset.
Hence a list may contain several copies of the same element.

We will use set-theoretic notation such as $A\subseteq X$ to say
that $A$ is a sublist of $X$, $A\cup B$ to denote the merge of two
sublists $A, B\subseteq X$, or $A=\emptyset$ to denote the empty
list. In particular with $A\cap B$ we mean the intersection as
sublist. Hence for example if in $X$ there are two distinct copies
of the same element, one appearing only in $A$ and the other only
in $B$, the intersection of the two sublists does not contain any
of the two copies, although the set-theoretic intersection
contains the element. Similarly for $A\setminus B$. By abuse of
notation we sometimes denote a list with curly brackets, instead
of using the more appropriate round bracket notation.

Given a list $X$, $\mathbb{P}(X)$ is the \textit{power set} of
$X$, i.e. the set of all sublists (including the empty list) of
$X$.

\subsection{Classical matroids}
A \textit{matroid} $\mathfrak{M}=\mathfrak{M}_X=(X,I)$ is a list
of \textit{vectors} $X$ with a set $I\subseteq \mathbb{P}(X)$,
whose elements are called \textit{independent sublists},
satisfying the following axioms:
\begin{enumerate}
  \item the empty list is independent;
  \item every sublist of an independent sublist is independent;
  \item let $A$ and $B$ be two independent sublists and assume
      that $A$ has more elements than $B$. Then there exists
      an element $a\in A\setminus B$ such that $B\cup\{a\}$
      is still independent.
\end{enumerate}

A maximal independent sublist is called a \emph{basis}. It easily
follows from the axioms that the collection of the bases
determines the matroid structure.

The last axiom implies that all the bases have the same
cardinality, which is called the \emph{rank} of the matroid.

Recall that $\mathfrak{M}$ is equipped with a \textit{rank
function} $rk:\mathbb{P}(X)\rightarrow \mathbb{N}\cup \{0\}$,
which is defined by $rk(A):=$ the maximal cardinality of an
independent sublist of $A$, for every $A\in \mathbb{P}(X)$. Notice
that the independent sublists are precisely the sublists whose
cardinality equals the rank. So the rank function determines the
matroid structure.

The axioms of a matroid can be given in several ways (see \cite{Ox}). We state them in terms of the rank
function, since they turn out to be more suitable for our work.

A \textit{matroid} $\mathfrak{M}=\mathfrak{M}_X=(X,rk)$ is a list
of \textit{vectors} $X$ with a \textit{rank function}
$rk:\mathbb{P}(X)\rightarrow \mathbb{N}\cup \{0\}$ which satisfies
the following axioms:
\begin{enumerate}
    \item if $A\subseteq X$, then $rk(A)\leq |A|$;
    \item if $A,B\subseteq X$ and $A\subseteq B$, then $rk(A)\leq rk(B)$;
    \item if $A,B\subseteq X$, then $rk(A\cup B)+rk(A\cap B)\leq
    rk(A)+rk(B)$.
\end{enumerate}
Notice in particular that the first axiom implies that
$rk(\emptyset)=0$.

The \textit{dual} of the matroid $\mathfrak{M}=(X,I)$ is defined
as the matroid with the same list $X$ of vectors, and with bases
the complements of the bases of $\mathfrak{M}$. We will denote it
by $\mathfrak{M}^*$. Notice that the rank function of
$\mathfrak{M}^*$ is given by $rk^*(A):=|A|-rk(X)+rk(X\setminus
A)$. In particular the rank of $\mathfrak{M}^*$ is the cardinality
of $X$ minus the rank of $\mathfrak{M}$.

We say that $v\in X$ is \textit{dependent} on $A\subseteq X$ if
$rk(A\cup\{v\})=rk(A)$, while we say that $v\in X$ is
\textit{independent} on $A$ if $rk(A\cup\{v\})=rk(A)+1$.

\subsection{Arithmetic matroids}

An \textit{arithmetic matroid} is a pair $(\mathfrak{M}_X,m)$,
where $\mathfrak{M}_X$ is a matroid on a list of vectors $X$, and
$m$ is a \textit{multiplicity function}, i.e.
$m:\mathbb{P}(X)\rightarrow \mathbb{N}\setminus \{0\}$ has the
following properties:
\begin{itemize}
    \item[(1)] if $A\subseteq X$ and $v\in X$ is dependent on $A$, then
    $m(A\cup\{v\})$ divides $m(A)$;
    \item[(2)] if $A\subseteq X$ and $v\in X$ is independent on $A$, then
    $m(A)$ divides $m(A\cup\{v\})$;
    \item[(3)] if $A\subseteq B\subseteq X$ and $B$ is a disjoint union $B=A\cup F\cup T$
    such that for all $A\subseteq C\subseteq B$  we have $rk(C)=rk(A)+|C\cap F|$, then
    $$
    m(A)\cdot m(B) = m(A\cup F)\cdot m(A\cup T).
    $$
    \item[(4)] if $A\subseteq B\subseteq X$ and $rk(A)=rk(B)$, then
    $$
    \mu_B(A):=\sum_{A\subseteq T\subseteq B}(-1)^{|T|-|A|}m(T)\geq 0.
    $$
    \item[(5)] if $A\subseteq B\subseteq X$ and $rk^*(A)=rk^*(B)$, then
    $$
    \mu_{B}^*(A):=\sum_{A\subseteq T\subseteq B}(-1)^{|T|-|A|}m(X\setminus T)\geq 0.
    $$
\end{itemize}
When $B=X$ we will denote $\mu_B(A)$ and $\mu_{B}^*(A)$ by
$\mu(A)$ and $\mu^*(A)$ respectively.

We will discuss further these axioms in Remarks
\ref{rm:axioms} and \ref{rm:axiom3}.

\begin{Rem}
The idea of enriching the matroid structure with a multiplicity
function was hinted in \cite{MoT}. However no axioms were given
for this function, so the concept remained vague. We have chosen
the name ``arithmetic matroid'' to avoid confusion with previous
constructions, and to emphasize the meaning of the multiplicity
function.
\end{Rem}

By abuse of notation, we sometimes denote by $\mathfrak{M}$ both
the arithmetic matroid and its underlying matroid.

We define the \textit{dual} of an arithmetic matroid
$(\mathfrak{M}_X,m)$ as the pair $(\mathfrak{M}_{X}^*,m^*)$, where
$\mathfrak{M}_{X}^*$ is the dual of $\mathfrak{M}_X$, and for all
$A\subseteq X$ we set $m^*(A):=m(X\setminus A)$. The following
lemma shows that this is in fact an arithmetic matroid.
\begin{Lem}
The dual of an arithmetic matroid is an arithmetic matroid.
\end{Lem}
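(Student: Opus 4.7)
The plan is to verify each of the five arithmetic matroid axioms for $(\mathfrak{M}_X^*, m^*)$ by translating it, via the complement map $A \mapsto X \setminus A$, to a corresponding statement about $(\mathfrak{M}_X, m)$ that the hypothesis gives us.

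The preliminary observation, needed for axioms (1)--(3), is the following dependence dictionary: using $rk^*(A) = |A| - rk(X) + rk(X \setminus A)$, a one-line computation shows that for $v \in X \setminus A$, the element $v$ is dependent on $A$ in $\mathfrak{M}^*$ if and only if $v$ is independent on $X \setminus (A \cup \{v\})$ in $\mathfrak{M}$, and vice versa. With this, axiom (1) for the dual follows from axiom (2) of the original (and axiom (2) from axiom (1)): for instance, if $v$ is dependent on $A$ in $\mathfrak{M}^*$, set $A' := X \setminus A \setminus \{v\}$; then $v$ is independent on $A'$ in $\mathfrak{M}$, so $m(A')$ divides $m(A' \cup \{v\})$, i.e., $m^*(A \cup \{v\})$ divides $m^*(A)$.

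For axiom (3), set $A' := X \setminus B$ and $B' := X \setminus A$, so that $B'$ contains $A'$ and decomposes as $B' = A' \cup F \cup T$ disjointly (the same $F,T$). The key step is to show that the dual hypothesis $rk^*(C) = rk^*(A) + |C \cap F|$ for all $A \subseteq C \subseteq B$ translates, via $C \leftrightarrow C' := X \setminus C$ and the identity $|C \cap F| = |F| - |C' \cap F|$, into the original hypothesis $rk(C') = rk(A') + |C' \cap T|$ for all $A' \subseteq C' \subseteq B'$; note in particular that the roles of $F$ and $T$ are interchanged. Axiom (3) for $\mathfrak{M}$ then yields $m(A') \cdot m(B') = m(A' \cup T) \cdot m(A' \cup F)$, which, since $m^*(A \cup F) = m(A' \cup T)$ and $m^*(A \cup T) = m(A' \cup F)$, is exactly the required equality $m^*(A) \cdot m^*(B) = m^*(A \cup F) \cdot m^*(A \cup T)$.

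Axioms (4) and (5) simply swap under duality, and this is purely formal: by definition the Möbius-type sum $\sum_{A \subseteq T \subseteq B}(-1)^{|T|-|A|}m^*(T)$ equals $\sum_{A \subseteq T \subseteq B}(-1)^{|T|-|A|}m(X \setminus T) = \mu_B^*(A)$ for $\mathfrak{M}$, and the hypothesis $rk^*(A) = rk^*(B)$ is the same on both sides, so axiom (4) for $\mathfrak{M}^*$ is exactly axiom (5) for $\mathfrak{M}$. Similarly, axiom (5) for $\mathfrak{M}^*$ reduces to axiom (4) for $\mathfrak{M}$ using $m^*(X \setminus T) = m(T)$ and the involutivity $rk^{**} = rk$. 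The only step involving non-trivial bookkeeping is the rank-condition translation in axiom (3), where the interchange of $F$ and $T$ must be tracked with care; the remaining verifications are immediate from the definitions.
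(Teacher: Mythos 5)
Your proposal is correct and follows essentially the same route as the paper: you verify each of the five axioms for $m^*$ by translating it, via $A \mapsto X \setminus A$, into the corresponding (swapped) axiom for $m$, noting that (1) and (2) pair up, (4) and (5) pair up, and (3) is self-dual with the roles of $F$ and $T$ interchanged. Your dependence dictionary and the identity $|C\cap F|=|F|-|C'\cap F|$ make the rank bookkeeping for axiom (3) a bit more transparent than the paper's presentation, but the underlying computation is the same.
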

\begin{proof}
We need to show that $m^*$ is a multiplicity function. It is
immediate to check that the axioms (1) and (2) are dual to each
others, i.e. the axiom (2) is equivalent to
\begin{enumerate}
    \item[($1^*$)] if $A\subseteq X$ and $v\in X$ is independent on $A$ in the
    dual, i.e. $rk^*(A\cup \{v\})=rk^*(A)+1$, then
    $m^*(A)$ divides $m^*(A\cup\{v\})$,
\end{enumerate}
while axiom (1) is equivalent to
\begin{enumerate}
    \item[($2^*$)] if $A\subseteq X$ and $v\in X$ is dependent on $A$ in the
    dual, i.e. $rk^*(A\cup \{v\})=rk^*(A)$, then
    $m^*(A\cup\{v\})$ divides $m^*(A)$.
\end{enumerate}
So clearly they are both satisfied in the dual.

The same is clearly true for axioms (4) and (5).

To check the third axiom, we notice that it is in fact
``self-dual''. More precisely, let $A\subseteq B\subseteq X$ and
$B$ be a disjoint union $B=A\cup F\cup T$ such that for all
$A\subseteq C\subseteq B$ we have $rk^*(C)=rk^*(A)+|C\cap F|$.
Notice that for $C:=B$ this implies $rk^*(B)=rk^*(A)+|F|$, i.e.
$$
|B|-rk(X)+rk(X\setminus B)=|A|-rk(X)+rk(X\setminus A)+|F|
$$
or
\begin{equation} \label{eq:lemdual}
rk(X\setminus A)=rk(X\setminus B)+|B|-|A|-|F|=rk(X\setminus
B)+|T|.
\end{equation}

Also, $X\setminus A$ is a disjoint union $X\setminus A=(X\setminus
B)\cup T\cup F$. For $A\subseteq C\subseteq B$, we have
$(X\setminus B)\subseteq (X\setminus C)\subseteq (X\setminus A)$.
We want to show that $rk(X\setminus C)=rk(X\setminus
B)+|(X\setminus C)\cap T|$, so that we are in the hypothesis of
axiom (3), and therefore we get
\begin{eqnarray*}
m^*(A)\cdot m^*(B) & = & m(X\setminus A)\cdot m(X\setminus
B)\\
\text{(by axiom (3))} & = & m((X\setminus B)\cup F)\cdot
m((X\setminus B)\cup T)\\
 & = & m((X\setminus B)\cup (X\setminus (X\setminus F)))\cdot
m((X\setminus B)\cup (X\setminus (X\setminus T)))\\
 & = & m(X\setminus (B\cap (X\setminus F)))\cdot
m(X\setminus (B\cap (X\setminus T)))\\
 & = & m(X\setminus (A\cup T))\cdot
m(X\setminus (A\cup F))\\
 & = & m^*(A\cup T)\cdot m^*(A\cup F).
\end{eqnarray*}

We have
\begin{eqnarray*}
|C|-rk(X)+rk(X\setminus C) & = & rk^*(C)\\
 & = & rk^*(A)+|C\cap F|\\
 & = & |A|-rk(X)+rk(X\setminus
A)+|C\cap F|,
\end{eqnarray*}
and this implies
\begin{eqnarray*}
rk(X\setminus C) & = & rk(X\setminus A)+|A|+|C\cap
F|-|C|\\
 & = & rk(X\setminus A)-|C\cap T|\\
 \text{(by \eqref{eq:lemdual})} & = & rk(X\setminus B)+|T|-|C\cap T|\\
 & = & rk(X\setminus B) + |(X\setminus C)\cap T|,
\end{eqnarray*}
as we wanted.
\end{proof}
\begin{Rem}
Notice that setting $m(S)=1$ for all the sublists $S\subseteq X$
of vectors in a matroid we get trivially a multiplicity function,
and hence a structure of an arithmetic matroid. In this case we
call $m$ \textit{trivial}. In fact this multiplicity function does
not add anything to the matroid structure. In this sense the
notion of an arithmetic matroid can be seen as a generalization of
the one of a matroid. Of course there are more interesting
examples.
\end{Rem}

\subsection{The main example}\label{ex:main}
The prototype of an arithmetic matroid (which in fact inspired our
definition) is the one that we are going to associate now to a
finite list $X$ of elements of a finitely generated abelian group
$G$. We recall that such a group is isomorphic to $G_f\oplus G_t$,
where $G_t$ is the torsion subgroup of $G$, which is finite, and
$G_f$ is free abelian, i.e it is isomorphic to $\mathbb{Z}^r$ for
some $r\geq 0$. Notice that in general we have many choices for
$G_f$, while $G_t$ is intrinsically defined.

Given a sublist $A\subseteq X$, we will denote by $\langle
A\rangle$ the subgroup of $G$ generated by the underlying set of
$A$.

We define the rank of a sublist $A\subseteq X$ as the maximal rank
of a free (abelian) subgroup of $\langle A\rangle$. This defines a
matroid structure on $X$.

For $A\in \mathbb{P}(X)$, let $G_A$ be the maximal subgroup of $G$
such that $\langle A\rangle\leq G_A$ and $|G_A: \langle
A\rangle|<\infty$, where $|G_A: \langle A\rangle|$ denotes the
index (as subgroup) of $\langle A\rangle$ in $G_A$. Then the
multiplicity $m(A)$ is defined as $m(A):=|G_A: \langle A\rangle|$.

Since we are interested in the multiplicities, clearly we can
always assume (and we will do it) that $\langle X\rangle$ has
finite index in $G$: otherwise we just replace $G$ by $G_X$, i.e.
the maximal subgroup of $G$ in which $\langle X\rangle$ has finite
index.

Notice that $m(\emptyset)$ equals the cardinality of $G_t$. In
particular $m(\emptyset)=1$ if and only if $G$ is free abelian.

We need to check that the function $m$ that we just defined is a
multiplicity function. The first axiom is easy to check. We
already observed that the second axiom for $m$ is just the first
axiom for the dual. Hence it would follow from the first one if we
can realize the dual arithmetic matroid as a list of elements in
a finitely generated group as we just did. This is the content of
the next section.

The third axiom for $m$ is proved in the following lemma.

\begin{Lem}
Given two lists $A$ and $B$ of elements of $G$ such that
$A\subseteq B$, and let $B$ be the disjoint union $B=A\cup T\cup
F$ such that for every $A\subseteq C\subseteq B$ we have
$rk(C)=rk(A)+|C\cap F|$. Then
$$
m(A)\cdot m(B)=m(A\cup T)\cdot m(A\cup F).
$$
\end{Lem}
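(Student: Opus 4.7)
\medskip

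The strategy is to work modulo the ``saturation'' $H := G_A$ of $\langle A\rangle$ in $G$, reducing the identity to two parallel index computations that share a common factor.

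First I would show that $T\subseteq H$ and that the images of $F$ in $G/H$ generate a free abelian subgroup of rank $|F|$ that meets $H$ trivially. For the first, each $t\in T$ is dependent on $A$, so $\langle A\cup\{t\}\rangle$ and $\langle A\rangle$ have the same rank, hence $\langle A\cup\{t\}\rangle/\langle A\rangle$ is finite, and by maximality of $G_A$ we get $t\in H$. The same argument with $A\cup T$ in place of $A$ shows that $G_{A\cup T}=H$, so in particular $m(A\cup T)=[H:\langle A\cup T\rangle]$. For the second claim, the hypothesis $rk(A\cup F)=rk(A)+|F|$ forces the $|F|$ elements of $F$ to be distinct, and the surjection $\langle F\rangle\twoheadrightarrow\langle\overline{F}\rangle\subseteq G/H$ from a group generated by $|F|$ elements onto a rank-$|F|$ subgroup of the torsion-free group $G/H$ must be an isomorphism. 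Therefore $\langle F\rangle\cap H=0$, and we obtain the internal direct-sum decompositions
\[
\langle A\cup F\rangle=\langle A\rangle\oplus\langle F\rangle,\qquad \langle B\rangle=\langle A\cup T\rangle\oplus\langle F\rangle.
\]

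Next, let $\pi\colon G\to G/H$ be the projection and let $S$ denote the saturation of $\langle\overline{F}\rangle$ in the torsion-free group $G/H$, so $[S:\langle\overline{F}\rangle]<\infty$. I claim that both $G_{A\cup F}$ and $G_B$ coincide with $K:=\pi^{-1}(S)$. Indeed $K$ contains both $\langle A\cup F\rangle$ and $\langle B\rangle$ with finite index (this follows from $[H:\langle A\rangle]<\infty$, $[H:\langle A\cup T\rangle]<\infty$, and $[S:\langle\overline F\rangle]<\infty$), while any larger subgroup of $G$ containing either of them with finite index must project into $S$ by maximality, hence is contained in $K$.

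Finally I would use the direct-sum decompositions to compute
\begin{align*}
m(A\cup F)&=[K:\langle A\rangle\oplus\langle F\rangle]=[S:\langle\overline F\rangle]\cdot[H:\langle A\rangle]=[S:\langle\overline F\rangle]\cdot m(A),\\
m(B)&=[K:\langle A\cup T\rangle\oplus\langle F\rangle]=[S:\langle\overline F\rangle]\cdot[H:\langle A\cup T\rangle]=[S:\langle\overline F\rangle]\cdot m(A\cup T),
\end{align*}
using in each case the short exact sequence $0\to H\oplus\langle F\rangle/(\cdot)\oplus\langle F\rangle\to K/(\cdot)\oplus\langle F\rangle\to S/\langle\overline F\rangle\to 0$. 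Dividing the two displayed equations yields $m(A\cup F)/m(A)=m(B)/m(A\cup T)$, i.e.\ the desired identity. The only delicate point is verifying that $K=G_{A\cup F}=G_B$, since this is what guarantees that the nontrivial saturation factor $[S:\langle\overline F\rangle]$ is the \emph{same} on both sides and therefore cancels out.
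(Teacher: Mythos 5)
Your proof is correct, and it takes a somewhat different route from the paper's. The paper's argument establishes, by a direct element-chasing computation using the rank hypothesis, the identity $\langle A\cup T\rangle\cap\langle A\cup F\rangle=\langle A\rangle$, then invokes the Second Isomorphism Theorem to obtain $\langle B\rangle/\langle A\cup F\rangle\cong\langle A\cup T\rangle/\langle A\rangle$, and finally writes $m(A\cup F)/m(B)=|\langle B\rangle:\langle A\cup F\rangle|=|\langle A\cup T\rangle:\langle A\rangle|=m(A)/m(A\cup T)$ by cancelling through the common saturations $G_{A\cup F}=G_B$ and $G_A=G_{A\cup T}$. You instead pass to the quotient $G/H$ with $H=G_A$, prove the direct-sum splittings $\langle A\cup F\rangle=\langle A\rangle\oplus\langle F\rangle$ and $\langle B\rangle=\langle A\cup T\rangle\oplus\langle F\rangle$ via a rank-counting argument (the surjection $\mathbb{Z}^{|F|}\twoheadrightarrow\langle\overline F\rangle$ onto a rank-$|F|$ torsion-free group), then explicitly identify $G_{A\cup F}=G_B$ as $\pi^{-1}(S)$ and compute both multiplicities as products $[H:\cdot]\cdot[S:\langle\overline F\rangle]$ via a short exact sequence. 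The two proofs hinge on equivalent facts (your $\langle F\rangle\cap H=0$ is the dual form of the paper's intersection claim once one knows $\langle A\cup T\rangle\subseteq H$), but your version is more structural and makes the common factor $[S:\langle\overline F\rangle]$ visible, at the cost of the extra verification that $K=\pi^{-1}(S)$ is indeed the saturation of both $\langle A\cup F\rangle$ and $\langle B\rangle$; the paper avoids that verification by using the ``rank equal implies saturation equal'' observation abstractly. One small point worth spelling out in your write-up: that $\langle\overline F\rangle$ actually has rank $|F|$ in $G/H$ requires a short computation (pass from $rk(A\cup F)=rk(A)+|F|$ to $\operatorname{rank}(\langle A\cup F\rangle/\langle A\rangle)=|F|$, then note the kernel of $\langle A\cup F\rangle/\langle A\rangle\to G/H$ is finite since it sits inside $H/\langle A\rangle$), which you have stated but not justified.
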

\begin{proof}
We take the chance here to fix some notation and make some general
remark that we will use later in this work.

Recall that for any subgroup $H$ of $G$ we have $H= H_f\oplus
H_t$, where $H_t$ is the torsion subgroup of $H$, and $H_f$ is
free abelian. We will call $G_H$ the maximal subgroup of $G$ in
which $H$ has finite index. Notice that $G_H=(G_{H})_f\oplus G_t$.

\begin{Rem} \label{rm:quotient}
Let $G$ be a finitely generated abelian group, $H\leq G$ a
subgroup. With the notation above we have $G=G_f\oplus G_t$ and
$H=H_f\oplus H_t$, where necessarily $H_t\leq G_t$. Notice that,
since by the isomorphism theorem
$$
\frac{H+G_t}{G_t}\cong \frac{H}{H\cap G_t}=\frac{H}{H_t}\cong H_f,
$$
we can choose a suitable $H_f'\leq G_f$ such that
$H+G_t=H_f'\oplus G_t$, for which we must have
$$
H_f'\cong \frac{H_f'\oplus G_t}{G_t}=\frac{H+G_t}{G_t}\cong H_f.
$$
But again by the isomorphism theorem
$$
\frac{G}{H+G_t}\cong \frac{G/G_t}{(H+G_t)/G_t}=\frac{(G_f\oplus
G_t)/G_t}{(H_f'\oplus G_t)/G_t}\cong \frac{G_f}{H_f'}$$
and
$$\frac{H+G_t}{H}\cong \frac{G_t}{H\cap G_t}=\frac{G_t}{H_t},
$$
so
$$
|G:H|=|G:H+G_t|\cdot |H+G_t:H|=|G_f:H_f'|\cdot |G_t:H_t|.
$$
Therefore, as long as we are interested in the multiplicities,
eventually replacing $H=H_f\oplus H_t$ by $H':=H_f'\oplus H_t$, we
can always assume (and we will do it) that $H_f\subseteq G_f$.
\end{Rem}

Given a list $S\subseteq X$, we will write $G_S$ for $G_{\langle
S\rangle}$. Notice also that by definition, $rk(S)$ is the rank of
$\langle S\rangle_f$ (as free abelian group), and
$m(S)=|G_S:\langle S\rangle|$. Moreover, for $S\subseteq X$, we
let (cf. Remark \ref{rm:quotient}) $\langle S\rangle_f\subseteq
(G_S)_f$, so that we have
\begin{eqnarray*}
m(S) & = & |G_{S}:\langle S\rangle|=|(G_S)_f \oplus G_t:
\langle S\rangle_f \oplus \langle S\rangle_t|\\
 & = & |(G_S)_f:\langle S\rangle_f|\cdot |G_t: \langle S\rangle_t|.
\end{eqnarray*}

Observe that if $U\subseteq V\subseteq X$ and $rk(U)=rk(V)$, then
$G_U=G_V$.

By the isomorphism theorem we have
$$
\langle B\rangle/\langle A\cup F\rangle\cong \langle A\cup
T\rangle/(\langle A\cup T \rangle\cap \langle A\cup F\rangle).
$$
\begin{Claim}
It follows by our assumptions that $\langle A\cup T
\rangle\cap \langle A\cup F\rangle= \langle A\rangle$.
\end{Claim}
\begin{proof}[proof of the Claim]
We will prove the two inclusions.

Let $g\in \langle A\cup T \rangle\cap \langle A\cup F\rangle$, so
$$
g=\sum_{a\in A}\alpha_a a+\sum_{t\in T}\beta_t t= \sum_{a\in
A}\gamma_a a+\sum_{f\in F}\delta_f f
$$
where the $\alpha$'s, $\beta$'s, $\gamma$'s and $\delta$'s are
integers.

Let $F'\subseteq F$ be the subset of $F$ for which the
corresponding coefficients $\delta$'s are nonzero. If
$F'=\emptyset$, then $g=\sum_{a\in A}\gamma_a a\in \langle
A\rangle$.

If $F'\neq \emptyset$, then, letting $C:=A\cup F'$ we have by
assumption
$$
rk(C)= rk(A)+|F'|.
$$
But
$$
\sum_{f\in F'}\delta_f f=\sum_{f\in F}\delta_f f=\sum_{a\in
A}\alpha_a a+\sum_{t\in T}\beta_t t- \sum_{a\in A}\gamma_a a\in
\langle A\cup T\rangle
$$
and $rk(A\cup T)=rk(A)$ (just set $C:=A\cup T$), therefore
$$
rk(C)\leq rk(A)+|F'|-1,
$$
a contradiction.

The other inclusion is obvious.
\end{proof}

So we have
\begin{equation} \label{eq:lemex}
\langle B\rangle/\langle A\cup F\rangle\cong \langle A\cup
T\rangle/\langle A \rangle.
\end{equation}

Observing that $rk(B)=rk(A\cup F)$ and $rk(A\cup T)=rk(A)$, we
have
\begin{eqnarray*}
\frac{m(A\cup F)}{m(B)} & = & \frac{|G_{A\cup F}:\langle A\cup
F\rangle|}{|G_B:\langle B\rangle|} = \frac{|G_{B}:\langle A\cup
F\rangle|}{|G_B:\langle
B\rangle|}\\
 & = & |\langle
B\rangle:\langle
A\cup F\rangle|\\
 \text{(by \eqref{eq:lemex})} & = &
|\langle A\cup T\rangle:\langle
A\rangle|\\
 & = & \frac{|G_{A}:\langle A\rangle|}{|G_A:\langle
A\cup T\rangle|} = \frac{|G_{A}:\langle A\rangle|}{|G_{A\cup
T}:\langle A\cup
T\rangle|}\\
   & = & \frac{m(A)}{m(A\cup T)},
\end{eqnarray*}
as we wanted.

This completes the proof of the lemma.
\end{proof}

The fourth axiom for $m$ is a consequence of Lemma \ref{coco} (see
Section 4 for the relevant definitions): for $A\subseteq B$ and
$rk(A)=rk(B)$, we have that $\mu_B(A)$ equals the number of connected components of
$$H_A\setminus \bigcup_{B\supseteq T\supsetneq A}
H_T.
$$
Therefore it is clearly nonnegative.

The fifth axiom for $m$ is the dual of the fourth one, and again
it will follow from the realization of the dual arithmetic matroid
by a list of elements in a finitely generated abelian group,
which is proved in the next section.

\subsection{Representability}

We recall that a (classical) matroid is said to be
\textit{representable in characteristic 0} or
\emph{0-representable} if it is realized by a list of vectors in
$\mathbb{R}^n$.

We say that an arithmetic matroid is \emph{representable} if it is
realized by a list of elements in a finitely generated abelian
group.

By ``realized'' we mean that the rank and the multiplicity
functions are defined as in the Example \ref{ex:main}.

We say that an arithmetic matroid is:
\begin{itemize}
  \item \emph{0-representable} if its underlying matroid is;
  \item \emph{torsion-free} if $m(\emptyset)=1$;
  \item \emph{GCD} if its multiplicity function satisfies the
  \textit{GCD rule}:\\
  $m(A)$ equals the greatest common divisor (GCD) of the
  multiplicities of the maximal independent sublists of $A$, i.e.
  $$
  m(A):=GCD(\{m(B)\mid B\subseteq
  A\text{ and }|B|=rk(B)=rk(A)\}).
  $$
\end{itemize}
\begin{Rem}
If an arithmetic matroid is representable, then it is clearly
0-representable (just tensor with the rational numbers
$\mathbb{Q}$). Moreover, if it is also torsion-free, then it is
easily seen to be GCD (cf. Remark \ref{rm:GCD}).
\end{Rem}
This provides two classes of examples of non-representable arithmetic matroids:

\begin{Ex}
Let $\mathfrak{M}$ be non-0-representable. For example, consider
the Fano matroid, i.e. the matroid defined by the 7 nonzero
elements of $\mathbb{F}_2^3$, where $\mathbb{F}_2$ is the field
with two elements. Then every multiplicity function (e.g. the
trivial one) will make it into a non-representable arithmetic
matroid.
\end{Ex}

\begin{Ex}
Let us take $X=\{a,b,c\}$ and define $\mathfrak{M}$ as the matroid
on $X$ having bases $\{a,b\}$, $\{b,c\}$ and $\{a,c\}$ . Clearly,
$\mathfrak{M}$ is realized by three non-collinear vectors in a
plane. Now set the multiplicities of the bases to be $2$ and all
the others to be $1$. It is easy to check that this is an
arithmetic matroid, but it is not GCD, since $m(X)=1\neq 2$. Hence
it is not representable.
\end{Ex}

\section{Representability of the dual}

In this section we prove that the dual of a representable
arithmetic matroid is still representable. Our construction gives
an extension of the Gale duality \cite{Ga} to our setting.

Consider an arithmetic matroid $\mathfrak{M}$ represented by a
list $X$ of elements of a finitely generated abelian group $G$. We
are looking for a finitely generated abelian group $G'$ and a
finite list $X'$ of its elements representing the matroid
$\mathfrak{M}^*$.
\begin{Rem}
Notice that of course we need to have $|X|=|X'|$. Also, the rank
of $\mathfrak{M}^*$ must be $|X|$ minus the rank of
$\mathfrak{M}$.
\end{Rem}

We start with a presentation of our finitely generated abelian
group $G$ as $\mathbb{Z}^r\oplus (\mathbb{Z}/d_1\mathbb{Z})\oplus
(\mathbb{Z}/d_2\mathbb{Z})\oplus \cdots \oplus
(\mathbb{Z}/d_s\mathbb{Z})$, where $d_i$ divides $d_{i+1}$ for
$i=1,2,\dots ,s-1$. It is well known that such a presentation
exists and it is unique up to isomorphism. We realize this
presentation as a quotient $\mathbb{Z}^{r+s}/\langle Q\rangle$,
where $Q$ is the list of vectors $q_1,q_2,\dots ,q_s\in
\mathbb{Z}^{r+s}$, where $q_i$ has $d_i$ in the $(r+i)$-th
position, and $0$ elsewhere. We remember the order in which the
elements of the list $Q$ are given.

Now a finite list $X\subseteq G$ is given by a list of cosets
$X=\{\overline{v}_1,\overline{v}_2,\dots ,\overline{v}_k\}$, where
of course with $\overline{v}_i$ we denote the coset $v_i+\langle
Q\rangle$ for $v_i\in \mathbb{Z}^{r+s}$. We choose representatives
$v_i\in \mathbb{Z}^{r+s}$ for the cosets, which are
determined up to linear combinations of elements from $Q$. We set
$\widetilde{X}:=\{v_1,v_2,\dots,v_k\}$ a list of elements in
$\mathbb{Z}^{r+s}$. Also in this case we remember the order in
which the elements of $\widetilde{X}$ are given.

Hence we consider the $(r+s)\times (k+s)$ matrix
$[\widetilde{X}\sqcup Q]$, whose columns are the elements from
$\widetilde{X}$ in the given order first and from $Q$ in the given
order next. We call $(\widetilde{X}Q)^t$ the list of its rows in
the given order from top to bottom, which are vectors in
$\mathbb{Z}^{k+s}$. Hence we set $G':=\mathbb{Z}^{k+s}/\langle
(\widetilde{X}Q)^t\rangle$, and
$X':=\{\overline{e}_1,\overline{e}_2,\dots,\overline{e}_k\}$ will
be the list of cosets in $G'$, where as usual $e_i\in
\mathbb{Z}^{k+s}$ denotes the vector with $1$ in the $i$-th
position and $0$ elsewhere.

We call $(\mathfrak{M}',m')$ the arithmetic matroid associated to
the pair $(G',X')$.

We denote $\{1,2,\dots,k\}$ by $[k]$, and for $S\subseteq [k]$ we
denote by $S^c$ its complement in $[k]$, and we set
$\overline{v}_S:=\{\overline{v}_i\in X\mid i\in S\}$ and
$\overline{e}_S:=\{\overline{e}_i\in X'\mid i\in S\}$.

The main result of this section is the following theorem.
\begin{Thm} \label{thm:dual}
The bijection $\overline{e}_S\leftrightarrow \overline{v}_{S}$ for
$S\subseteq [k]$ is an isomorphism of arithmetic matroids between
$\mathfrak{M}'$ and $\mathfrak{M}^*$, i.e. it preserves both the
rank and the multiplicity functions.
\end{Thm}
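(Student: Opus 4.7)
The plan is to identify, for each $S\subseteq[k]$, the two quotient groups $G/\langle\overline{v}_{S^c}\rangle$ and $G'/\langle\overline{e}_S\rangle$ as cokernels of the \emph{same} integer matrix and its transpose, so that a single application of the Smith normal form theorem simultaneously delivers $rk'(\overline{e}_S)=rk^*(\overline{v}_S)$ and $m'(\overline{e}_S)=m^*(\overline{v}_S)$.

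First I would set up the matrix. Let $A_S:=[\widetilde{X}_{S^c}\mid Q]$ be the $(r+s)\times(|S^c|+s)$ integer matrix obtained from $M=[\widetilde{X}\mid Q]$ by retaining only the columns indexed by $S^c$ together with all $s$ torsion columns. Since $G=\mathbb{Z}^{r+s}/\langle Q\rangle$ and $\langle\overline{v}_{S^c}\rangle$ is the image of $\langle\widetilde{X}_{S^c}\rangle$ in $G$, one immediately reads off
\[G/\langle\overline{v}_{S^c}\rangle=\mathrm{coker}(A_S).\]
On the other hand, quotienting $G'=\mathbb{Z}^{k+s}/\langle(\widetilde{X}Q)^t\rangle$ further by $\langle\overline{e}_S\rangle=\langle e_i:i\in S\rangle$ simply erases the coordinates indexed by $S$; the surviving relations are the rows of $A_S$, i.e.\ the columns of $A_S^t$, so
\[G'/\langle\overline{e}_S\rangle=\mathrm{coker}(A_S^t).\]

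Second, I would invoke the Smith normal form of $A_S$. If its nonzero invariant factors are $d_1\mid\cdots\mid d_t$, where $t$ is the rational rank of $A_S$, then both $\mathrm{coker}(A_S)$ and $\mathrm{coker}(A_S^t)$ decompose as a free abelian group plus the same finite torsion group $T:=\bigoplus_{i=1}^{t}\mathbb{Z}/d_i\mathbb{Z}$, with free ranks $(r+s)-t$ and $(|S^c|+s)-t$ respectively.

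Third, I would translate. The multiplicity $m(\overline{v}_{S^c})$ is precisely the order of the torsion part of $G/\langle\overline{v}_{S^c}\rangle$, and similarly $m'(\overline{e}_S)$ equals that of $G'/\langle\overline{e}_S\rangle$; these agree by the previous step, yielding $m'(\overline{e}_S)=|T|=m(\overline{v}_{S^c})=m^*(\overline{v}_S)$. For ranks, the free rank of $G/\langle\overline{v}_{S^c}\rangle$ equals $rk(X)-rk(\overline{v}_{S^c})=r-rk(\overline{v}_{S^c})$, while that of $G'/\langle\overline{e}_S\rangle$ equals $rk(X')-rk'(\overline{e}_S)$. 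A short side-computation (using that $\langle X\rangle$ has finite index in $G$ and that $Q$ has $\mathbb{Q}$-rank $s$) shows that $M$ has $\mathbb{Q}$-rank $r+s$, whence $rk(X')=k-r$; subtracting the two free-rank formulas then gives $rk'(\overline{e}_S)=|S|-rk(X)+rk(\overline{v}_{S^c})=rk^*(\overline{v}_S)$.

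The main obstacle I expect is bookkeeping rather than depth: pinning down the identification $G'/\langle\overline{e}_S\rangle=\mathrm{coker}(A_S^t)$ with the correct conventions on rows versus columns of $M$, and verifying the auxiliary claim that $\langle X'\rangle$ has finite index in $G'$ so that $rk(X')=k-r$. Once these identifications are in place, the Smith normal form theorem yields both the rank and the multiplicity equalities in one stroke, and as a bonus it completes the verification in Section~\ref{ex:main} of axioms (2) and (5) for the representable case.
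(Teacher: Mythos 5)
Your proposal is correct and takes a cleaner, genuinely different route from the paper. The paper first reduces the computation of ranks and multiplicities from $G$ (resp.\ $G'$) to $\mathbb{Z}^{r+s}$ (resp.\ $\mathbb{Z}^{k+s}$) by adjoining $Q$ (resp.\ $(\widetilde{X}Q)^t$), then treats rank and multiplicity by separate devices: the rank of $\overline{e}_S$ is extracted from a block-structure analysis of the $(k+s)\times(|S|+r+s)$ matrix $[e_S\sqcup(\widetilde{X}Q)^t]$, and the multiplicity is extracted by applying the GCD-of-maximal-minors formula (Remark~\ref{rm:GCD}) to that same matrix, observing that its nonzero maximal minors are $\pm$ the maximal minors of $[v_{S^c}\cup Q]=A_S$. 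You instead skip the $(k+s)$-dimensional detour altogether: identifying $G/\langle\overline{v}_{S^c}\rangle$ and $G'/\langle\overline{e}_S\rangle$ as $\mathrm{coker}(A_S)$ and $\mathrm{coker}(A_S^t)$ directly, and then reading off both the rank and the multiplicity equalities from a single Smith normal form of $A_S$. The two arguments are computationally equivalent (GCD of maximal minors equals the product of invariant factors, which is the order of the torsion subgroup), but your packaging is more conceptual and does both statistics in one stroke. Two small points deserve care when writing it up: (i) you should record explicitly that $m(A)$ equals the order of the torsion subgroup of $G/\langle A\rangle$ (this follows from the definition $m(A)=|G_A:\langle A\rangle|$ since $G_A/\langle A\rangle$ is exactly that torsion subgroup), as the paper never states it in that form; and (ii) in the rank computation, the quantity that appears naturally is the free rank of $G'$, namely $k-r$, which you should use in place of $rk(X')$ until you have separately verified that $\langle X'\rangle$ has finite index in $G'$ — a verification that is easy (the last $s$ rows of $[\widetilde{X}\mid Q]$ together with $e_1,\dots,e_k$ span $\mathbb{Q}^{k+s}$) but, as you correctly flag, not automatic.
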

\begin{proof}
We start with some easy observations. Take a sublist $A\subseteq
\widetilde{X}$ of elements of $\mathbb{Z}^{r+s}$, and set
$\overline{A}:=\{\overline{a}\mid a\in A\}\subseteq X$ a list of
elements of $G$. Then $\langle A\cup Q\rangle/\langle
Q\rangle\cong \langle \overline{A}\rangle$, where on the left we
are taking subgroups in $\mathbb{Z}^{r+s}$, while on the right we
are taking a subgroup of $G$. Hence the rank of $\overline{A}$
will be the same as the rank of $\langle A\cup Q\rangle$ minus the
rank of $\langle Q\rangle$.

Moreover, the multiplicity of $\overline{A}$ in $G$ is the same as
the multiplicity of $A\cup Q$ in $\mathbb{Z}^{r+s}$. In fact let
$\langle T\rangle/\langle Q\rangle$ be the maximal subgroup of $G$
in which $\langle \overline{A}\rangle$ has finite index, where
$T\subseteq \mathbb{Z}^{r+s}$. Then $$|\langle T\rangle/\langle
Q\rangle: \langle A\cup Q\rangle/\langle Q\rangle|=|\langle
T\rangle : \langle A\cup Q\rangle|$$ and $\langle T\rangle$ is
clearly the maximal subgroup of $\mathbb{Z}^{r+s}$ in which
$\langle A\cup Q\rangle$ has finite index.

Of course analogous observations apply to the sublists of $G'$,
with $(\widetilde{X}Q)^t$ in place of $Q$.

So to compute the ranks and the multiplicities of lists of
elements in $G$ or $G'$ (which is what we need to do if we want to
check that our map is an isomorphism of arithmetic matroid) we can
reduce ourself to compute them in $\mathbb{Z}^{r+s}$ or
$\mathbb{Z}^{k+s}$ respectively.
\begin{Rem} \label{rm:GCD}
Notice also that in $\mathbb{Z}^{m}$, to compute the multiplicity
of a list of elements, it is enough to see the elements as the
columns of a matrix, and to compute the greatest common divisor of
its minors of order the rank of the matrix (cf. \cite[Theorem
2.2]{StL0}).
\end{Rem}

We introduce a useful notation: given a list $Y$ of vectors in
$\mathbb{Z}^m$ given in some order, we denote by $[Y]$ the matrix
whose columns are the elements of $Y$ in the given order.

Given $S\subseteq [k]$, we want to compute the rank of
$\overline{e}_S$. Following our observations, first we want to
compute the rank of $[e_S\sqcup (\widetilde{X}Q)^t]$, where
$e_S:=\{e_i\mid i\in S\}$ and this is the matrix whose columns are
the elements of $e_S$ in some order first and the elements of
$(\widetilde{X}Q)^t$ in the given order next.

Notice that the matrix $[e_S\sqcup (\widetilde{X}Q)^t]$ looks like
$$
\left(%
\begin{array}{cc}
[e_S] & [\widetilde{X}]^t\\
\begin{array}{ccc}
0 & \cdots & 0   \\
 \vdots & & \vdots    \\
0 & \cdots & 0  \\
\end{array} &
[Q]^t
\end{array} \right)
$$
where the upper $t$ denotes the transpose of the matrix.

So the rank of $[e_S\sqcup (\widetilde{X}Q)^t]$ will be $|S|$
(looking at the $|S|$ rows indexed by $S$) plus the rank of
$v_{S^c}\cup Q$ (looking at the other rows), where
$v_{S^c}:=\{v_i\mid i\in S^c\}$. But as we already observed the
rank of $\overline{e}_{S}$ is the same as the rank that we just
computed minus the rank of $[(\widetilde{X}Q)^t]$, which is the
rank of $\widetilde{X}\cup Q$. Hence we just showed that the rank
of $\overline{e}_S$ is
$$
|S|-rk(\widetilde{X}\cup Q)+rk(v_{S^c}\cup Q
)=|S|-(rk(\widetilde{X}\cup Q)-rk(Q))+(rk(v_{S^c}\cup Q)-rk(Q)).
$$
As we already observed $rk(\widetilde{X}\cup Q)-rk(Q)$ and
$rk(v_{S^c}\cup Q)-rk(Q)$ are the ranks of $X$ and
$\overline{v}_{S^c}=X\setminus \overline{v}_S$ in the original
matroid, hence the rank of $\overline{e}_S$ is precisely the rank
of $\overline{v}_S$ in the dual.

Let us compute the multiplicity of $\overline{e}_S$. Using Remark
\ref{rm:GCD}, we have to compute the greatest common divisor of
the minors of maximal rank in the matrix $[e_S\sqcup
(\widetilde{X}Q)^t]$. Notice that any nonzero minor of maximal
order must involve all the rows indexed by $S$, otherwise we can
clearly get a nonzero minor of higher order using the missing
rows. But a nonzero minor of maximal order involving the rows
indexed by $S$ is clearly plus or minus a nonzero minor of maximal
order in the matrix $[v_{S^c}\cup Q]$. But those are exactly the
minors that we use to compute the multiplicity of
$\overline{v}_{S^c}$.

This proves that
$m'(\overline{e}_S)=m(\overline{v}_{S^c})=m^*(\overline{v}_S)$,
completing the proof of the theorem.
\end{proof}

This completes the proof that the ``main example'' (Section 1.4)
gives indeed an arithmetic matroid.

\section{Arithmetic Tutte polynomial and deletion-contraction}

\subsection{The classical Tutte polynomial}

The \emph{Tutte polynomial} $T_X(x,y)=T(\mathfrak{M}_X;x,y)$ of the matroid $\mathfrak{M}_X=(X,rk)$
is defined (in \cite{Tu}) as
$$T_X(x,y):= \sum_{A\subseteq X} (x-1)^{rk(X)-rk(A)} (y-1)^{|A|-rk(A)}.$$

From the definition it is clear that $T_X(1,1)$ is equal to the
number of bases of the matroid.

Although it is not clear from the definition, the coefficients of the Tutte polynomial are positive, and they
have a nice combinatorial interpretation. In fact, the Tutte
polynomial embodies two statistics on the list of the bases called
\textit{internal} and \textit{external activity}.

Let us fix a total order on $X$, and let $B$ be a basis extracted
from $X$.

We say that $v\in X\setminus B$ is \emph{externally active} on $B$
if $v$ is dependent on the list of elements of $B$ following it
(in the total order fixed on $X$). We say that $v\in B$ is
\emph{internally active} on $B$ if $v$ is externally active on the
complement $B^c:=X\setminus B$ in the dual matroid (where $B^c$ is
a basis).

The number $e(B)$ of externally active elements is called the
\emph{external activity} of $B$, while the number $i(B)=e^*(B^c)$
of internally active elements is called the \emph{internal
activity} of $B$.

The following result is proved in \cite{Cr}.
\begin{Thm}[Crapo]\label{Cra}
$$T_X(x,y)=\mathop{\sum_{B\subseteq X}}_{B \text{basis}}x^{e^*(B^c)}y^{e(B)}.$$
\end{Thm}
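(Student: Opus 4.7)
The plan is to group the terms in the defining sum for $T_X(x,y)$ according to a canonical basis assigned to each subset of $X$, via Crapo's partition of $\mathbb{P}(X)$ into Boolean intervals indexed by bases. Throughout, fix the total order on $X$ used to define activity.

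\textbf{Step 1 (Activity intervals partition $\mathbb{P}(X)$).} For each basis $B$, let $I(B)$ and $E(B)$ denote respectively the sets of internally and externally active elements, and set
$$\mathcal{I}(B) := \{A \subseteq X \mid B \setminus I(B) \subseteq A \subseteq B \cup E(B)\}.$$
The goal is to show that $\{\mathcal{I}(B)\}_B$ is a partition of $\mathbb{P}(X)$. The approach is to construct a retraction $\phi : \mathbb{P}(X) \to \{\text{bases}\}$ by a greedy scan of $X$ in the fixed total order: at each step include the current element $x_i$ in $\phi(A)$ whenever this keeps the partial selection independent, breaking ties (via basis exchange) by preferring elements of $A$. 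One then checks $\phi^{-1}(B) = \mathcal{I}(B)$ using the fundamental circuit/cocircuit descriptions of activity: $e \in X \setminus B$ is externally active iff $e$ is the greatest element of the unique circuit contained in $B \cup \{e\}$, and $e \in B$ is internally active iff $e$ is the greatest element of the unique cocircuit contained in $(X \setminus B) \cup \{e\}$.

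\textbf{Step 2 (Monomial per subset).} For $A \in \mathcal{I}(B)$, write $A = (B \setminus I') \cup E'$ uniquely with $I' \subseteq I(B)$ and $E' \subseteq E(B)$. The claim is that
$$rk(X) - rk(A) = |I'| \quad \text{and} \quad |A| - rk(A) = |E'|.$$
The first identity follows because each $e \in I'$ has a fundamental cocircuit meeting $B$ only at $e$, so removing $I'$ from $B$ drops the rank by exactly $|I'|$; and each $e' \in E'$ lies in the closure of $B \setminus \{e'\}$ (by its fundamental circuit being contained in $B \cup \{e'\}$ with $e'$ maximal), hence in the closure of $B \setminus I'$, so adding $E'$ does not restore the rank. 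The second identity is then immediate from $|A| = |B| - |I'| + |E'| = rk(X) - |I'| + |E'|$.

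\textbf{Step 3 (Summation).} Combining Steps 1 and 2 and using $|I(B)| = i(B) = e^*(B^c)$ and $|E(B)| = e(B)$,
\begin{align*}
T_X(x,y) &= \sum_{B \text{ basis}} \ \sum_{I' \subseteq I(B)}\sum_{E' \subseteq E(B)} (x-1)^{|I'|}(y-1)^{|E'|} \\
&= \sum_{B \text{ basis}} x^{|I(B)|}\, y^{|E(B)|} \\
&= \sum_{B \text{ basis}} x^{e^*(B^c)}\, y^{e(B)},
\end{align*}
which is Crapo's formula. The main obstacle is Step 1: giving a clean definition of $\phi$ and verifying that its fibers coincide with the activity intervals $\mathcal{I}(B)$. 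This rests on a delicate application of the basis-exchange axiom to match greedy choices against the maximum-in-circuit and maximum-in-cocircuit characterization of activity. Once Step 1 is established, Steps 2 and 3 are bookkeeping.
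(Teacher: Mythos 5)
The paper does not prove this theorem; it cites Crapo's original article \cite{Cr} and uses the statement as a black box. So there is no in-paper argument to compare against, and your proposal must be judged on its own. The approach you sketch — Crapo's partition of $\mathbb{P}(X)$ into Boolean intervals $[\,B\setminus I(B),\, B\cup E(B)\,]$ indexed by bases, followed by summing $(x-1)^{|I'|}(y-1)^{|E'|}$ over each interval — is indeed the standard and correct route.

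However, Step 2 has a genuine gap as written. You need $rk\big((B\setminus I')\cup E'\big)=rk(B\setminus I')$, i.e.\ that each $e'\in E'$ lies in the closure of $B\setminus I'$. Your justification says $e'$ lies in the closure of $B\setminus\{e'\}$, but since $e'\notin B$ this is just the closure of $B$, which is all of $X$ and gives nothing; the inference ``hence in the closure of $B\setminus I'$'' does not follow. What is actually needed is the fact that the fundamental circuit $C(e',B)$ is disjoint from $I(B)$. This does hold, by the circuit--cocircuit pairing: $e\in C(e',B)$ iff $e'\in C^*(e,B)$, and if both occurred then the extremality of $e'$ in $C(e',B)$ (external activity) and of $e$ in $C^*(e,B)$ (internal activity) would force $e<e'$ and $e'<e$ simultaneously. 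That observation is the crux of Step 2 and should be made explicit. (A minor point: your parenthetical characterizations use ``greatest element of its circuit/cocircuit,'' whereas the paper's definition of activity — dependence on the elements of $B$ \emph{following} $v$ — makes the active element the \emph{smallest} of its fundamental circuit/cocircuit. Either convention yields the theorem, but you should use one consistently and matching the paper's.) Step 1 you rightly flag as the nontrivial combinatorial input; it is correct as stated but would need to be carried out. Step 3 is fine.
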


Hence the coefficients of $T_X(x,y)$ count the number of bases
having given internal and external activities.

\subsection{Arithmetic Tutte polynomial}

Following \cite{MoT}, we associate to an arithmetic matroid $\mathfrak{M}_X$ its
\textit{arithmetic Tutte polynomial}
$M_X(x,y)=M(\mathfrak{M}_X;x,y)$ defined as
\begin{equation} \label{eq:aritutte}
M_X(x,y):=\sum_{A\subseteq
X}m(A)(x-1)^{rk(X)-rk(A)}(y-1)^{|A|-rk(A)}.
\end{equation}

This polynomial has many applications. In particular it encodes
much combinatorial information on toric arrangements (as we will
recall in Section 4), on zonotopes (\cite{DM}), and on Dahmen-Micchelli spaces
(\cite{MoT}, \cite{DMi}, \cite{DM2}; see also \cite{li}, \cite{HRX}, \cite{Le}
for related topics).

It has been shown in \cite{MoT} that if $\mathfrak{M}_X$ is
representable, the coefficients of this polynomial are positive.

Our main goal will be to give a combinatorial interpretation of
this polynomial for \textit{any} arithmetic matroid. By doing
this, we will also extend the positivity result.

But before that, we want to discuss briefly the connection between
the axioms that we gave for an arithmetic matroid and this
polynomial.
\begin{Rem} \label{rm:axioms}
We make some remark on the independence of the axioms of a
multiplicity function.

Consider the matroid on $X_1=\{t\}$, where $rk(X_1)=0$. Setting
$m(X_1)=2$ and $m(\emptyset)=3$, we have that $m$ satisfies all
the axioms of a multiplicity function, except the first one.

Consider the matroid on $X_2=\{f\}$, where $rk(X_2)=1$. Setting
$m(X_2)=3$ and $m(\emptyset)=2$, we have that $m$ satisfies all
the axioms of a multiplicity function, except the second one.

The next two examples were suggested to us by Petter Br\"{a}nd\'{e}n.

Consider the matroid on $X_3=\{f,t\}$, where
$rk(\{f\})=rk(\{f,t\})=1$ and $rk(\{t\})=rk(\emptyset)=0$, i.e.
$\{f\}$ is the only basis. Setting
$m(\{f\})=m(\{f,t\})=m(\emptyset)=2$ and $m(\{t\})=1$ we have that
$m$ satisfies all the axioms of a multiplicity function, except
the third one. Moreover, applying formula \eqref{eq:aritutte} we
would get $M_{X_3}(x,y)=x+y+xy-1$. 

Consider the matroid on $X_4=\{t_1,t_2,t_3,t_4\}$, where
$rk(X_4)=0$. Setting $m(\emptyset)=4$, $m(\{t_i\})=2$ for
$i=1,2,3,4$, and $m$ equal one for all the other sublists, we have
that $m$ satisfies all the axioms of a multiplicity function,
except the fourth one. Moreover, applying formula
\eqref{eq:aritutte} we would get $M_{X_4}(x,y)=y^4+4y-1$.

Consider the matroid on $X_5=\{f_1,f_2,f_3,f_4\}$, where
$rk(A)=|A|$ for all $A\subseteq X$, i.e. $X_5$ is the only basis.
Setting $m(X_5)=4$, $m(A)=2$ for $A\subseteq X$ and $|A|=3$, and
$m$ equal one for all the other sublists, we have that $m$
satisfies all the axioms of a multiplicity function, except the
fifth one. Moreover, applying formula \eqref{eq:aritutte} we would
get $M_{X_5}(x,y)=x^4+4x-1$.

Summarizing, each of the axioms of a multiplicity function is
independent on the other ones. Moreover, even dropping only axiom
(3) or only axiom (4) or only axiom (5) we can get an arithmetic
Tutte polynomial with negative coefficients. In this sense,
without those axioms we would get a ``non-combinatorial'' object.
\end{Rem}

\subsection{Deletion and contraction}

We introduce two fundamental constructions.
Recall that given a matroid $\mathfrak{M}_X$ and a vector $v\in
X$, we can define the \textit{deletion} of $\mathfrak{M}_X$ as the
matroid $\mathfrak{M}_{X_1}$, whose list of vectors is
$X_1:=X\setminus \{v\}$, and whose independent lists are just the
independent lists of $\mathfrak{M}_X$ contained in $X_1$. Notice
that the rank function $rk_1$ of $\mathfrak{M}_{X_1}$ is just the
restriction of the rank function $rk$ of $\mathfrak{M}_{X}$.

Given an arithmetic matroid $(\mathfrak{M}_X,m)$ and a vector
$v\in X$, we define the \textit{deletion} of $(\mathfrak{M}_X,m)$
as the arithmetic matroid $(\mathfrak{M}_{X_1},m_1)$, where
$\mathfrak{M}_{X_1}$ is the deletion of $\mathfrak{M}_{X}$ and
$m_1(A):=m(A)$ for all $A\subseteq X_1=X\setminus \{v\}$. It is
easy to check that this is in fact an arithmetic matroid.

Recall that given a matroid $\mathfrak{M}_X$ and a vector $v\in
X$, we can define the \textit{contraction} of $\mathfrak{M}_X$ as
the matroid $\mathfrak{M}_{X_2}$, whose list of vectors is
$X_2:=X\setminus \{v\}$, and whose rank function $rk_2$ is given
by $rk_2(A):=rk(A\cup \{v\})-rk(\{v\})$, where of course $rk$ is
the rank function of $\mathfrak{M}_X$.

Given an arithmetic matroid $(\mathfrak{M}_X,m)$ and a vector
$v\in X$, we define the \textit{contraction} of
$(\mathfrak{M}_X,m)$ as the arithmetic matroid
$(\mathfrak{M}_{X_2},m_2)$, where $\mathfrak{M}_{X_2}$ is the
contraction of $\mathfrak{M}_{X}$ and $m_2(A):=m(A\cup \{v\})$ for
all $A\subseteq X_2=X\setminus \{v\}$. It is easy to check that
this is in fact an arithmetic matroid.

\begin{Ex}
If an arithmetic matroid $(\mathfrak{M}_X,m)$ is represented by a
list $X$ of elements of $G$, it is easy to check that the deletion
corresponds to the arithmetic matroid $(\mathfrak{M}_{X_1},m_1)$
of the sublist $X_1:=X\setminus \{v\}$, while the contraction
corresponds to the arithmetic matroid $(\mathfrak{M}_{X_2},m_2)$
of the list $\overline{X}:=\{a+\langle v\rangle\mid a\in
X\setminus \{v\}\}$ of cosets in $G/\langle v\rangle$.
\end{Ex}

Observe that the deletion of $v\in X$ in $\mathfrak{M}_X$
corresponds to the contraction of $v\in X$ in
$\mathfrak{M}_{X}^*$, and viceversa the contraction of $v\in X$ in
$\mathfrak{M}_X$ corresponds to the deletion of $v\in X$ in
$\mathfrak{M}_{X}^*$.

\subsection{Free, torsion, and proper vectors}
Given an element $v\in X$, we denote by $rk_1$ and $rk_2$ the rank
function of the deletion and the contraction by $v$ respectively.

We say that $v\in X$ is:
\begin{itemize}
  \item \textit{free} if both
$rk_1(X\setminus\{v\})=rk(X\setminus\{v\})=rk(X)-1$ and
$rk_2(X\setminus \{v\})=rk(X)-1$;
  \item \textit{torsion} if both
$rk_1(X\setminus\{v\})=rk(X)$ and $rk_2(X\setminus \{v\})=rk(X)$;
  \item  \textit{proper} if both
$rk_1(X\setminus\{v\})=rk(X)$ and $rk_2(X\setminus
\{v\})=rk(X)-1$.
\end{itemize}

Observe that any vector of a matroid is of one and only one of the
previous three types.

\begin{Ex}
If we look at the arithmetic matroid represented by a list $X$ of
elements of $G$, the torsion vectors are precisely the torsion
elements in the algebraic sense, while a free vector will be an
element of $G$ which is not torsion and such that the sum $\langle
X\setminus \{v\}\rangle \oplus \langle v\rangle$ is direct.
\end{Ex}

\begin{Rem}\label{FTP}
A vector is free in a matroid if and only if it is torsion in its
dual. While a vector is proper in a matroid if and only if it is
proper in its dual.

Moreover, suppose that $v$ and $w$ are two distinct vectors, and
we make a deletion with respect to $w$. If $v$ is free or torsion,
then it is again free or torsion respectively in the deletion
matroid. While if $v$ is proper, then it can be proper or free,
but not torsion in the deletion matroid.

Dually, if we make a contraction with respect to $w$, then if $v$
is free or torsion, then it is again free or torsion respectively
in the contraction matroid. While if $v$ is proper, then it can be
proper or torsion, but not free in the contraction matroid.
\end{Rem}

\subsection{Molecules}\label{s-mole}
We define a \emph{molecule} as an arithmetic matroid that does not
have proper vectors.

Hence a molecule will be given by a list of the form
$X=\{f_1,f_2,\dots,f_r,t_1,t_2,\dots,t_s\}$, where the $f_i$'s are
free vectors, and the $t_j$'s are torsion vectors.

Notice that by Remark \ref{FTP} the dual of a molecule is still a molecule.

\begin{Rem} \label{rm:axiom3}
Notice that in the assumption of axiom (3) of a multiplicity
function, we are simply asking that if we do the deletion of the
elements of $X\setminus B$ and the contraction of the elements of
$A$ we are left with a molecule, whose only basis is going to be
$F$.
\end{Rem}

Looking at the underlying matroid, a molecule consists of a
(unique) basis plus a bunch of rank $0$ elements. For example, in
the $0$-representable case, the latter ones would just correspond
to a bunch of zeros.

The classical Tutte polynomial of such a matroid turns out to be
very simple: for our $X$ it would correspond to the monomial $x^ry^s$,
where $r$ is the rank of the matroid, and $s$ is the number of
rank $0$ elements.

In fact, by deletion-contraction, one can reduce the computation
of the classical Tutte polynomial (but also the proof of several
properties of a matroid) to the singletons (which are necessarily
torsion or free), which are usually called the \emph{atoms} of the
matroid (this justifies our \textit{molecules}).

But we will see that the arithmetic Tutte polynomial of a molecule
is not so simple. In fact, we will give our combinatorial
interpretation first in the case of molecules. Then we will extend
it to the general case. Indeed, to prove the general case, we will
apply recursively deletion-contraction for all the proper vectors,
reducing ourselves to the molecules.

\subsection{Direct sum}

Given two matroids $\mathfrak{M}_{X_1}=(X_1,I_1)$ and
$\mathfrak{M}_{X_2}=(X_2,I_2)$, we can form their \textit{direct
sum}: this will be the matroid
$\mathfrak{M}_X=\mathfrak{M}_{X_1}\oplus \mathfrak{M}_{X_2}$ whose
list of vectors is the disjoint union $X:=X_1\sqcup X_2$, and
where the independent lists will be the disjoint unions of lists
from $I_1$ with lists from $I_2$. Hence for any sublist
$A\subseteq X$, the rank $rk(A)$ of $A$ will be the sum of the
rank $rk_1(A\cap X_1)$ of $A\cap X_1$ in $\mathfrak{M}_{X_1}$ with
the rank $rk_2(A\cap X_2)$ of $A\cap X_2$ in $\mathfrak{M}_{X_2}$.

If the two matroids are $0$-representable in two vector spaces
$V_1$ and $V_2$ respectively, the direct sum matroid corresponds
of course to the matroid of the list $X:=X_1\sqcup X_2$ in the
direct sum $V_1\oplus V_2$, with the obvious identification of the
the subspaces $V_1$ and $V_2$.

It follows immediately from the definition of the Tutte polynomial
that in this case
$$
T_X(x,y)=T_{X_1}(x,y)\cdot T_{X_2}(x,y).
$$

Given two arithmetic matroids $(\mathfrak{M}_{X_1},m_1)$ and
$(\mathfrak{M}_{X_2},m_2)$ we define their \textit{direct sum} as
the arithmetic matroid $(\mathfrak{M}_{X},m)$, where
$\mathfrak{M}_{X}:=\mathfrak{M}_{X_1}\oplus \mathfrak{M}_{X_2}$,
and for any sublist $A\subseteq X=X_1\sqcup X_2$, we set
$m(A):=m_1(A\cap X_1)\cdot m_2(A\cap X_2)$. It is easy to check
that this is indeed an arithmetic matroid.

Again, it is clear from the definition of the arithmetic Tutte
polynomial that in this case
$$
M_X(x,y)=M_{X_1}(x,y)\cdot M_{X_2}(x,y).
$$

If the two arithmetic matroids are represented by a list $X_1$ of
elements of a group $G_1$ and a list $X_2$ of elements of a group
$G_2$, then, with the obvious identifications, $X:=X_1\sqcup X_2$
is a list of elements of the group $G:=G_1\oplus G_2$, and the
arithmetic matroid associated to this list is exactly the direct
sum of the two.
\begin{Ex} \label{ex:molecule}
Consider a molecule given by a list
$X=\{f_1,f_2,\dots,f_r,t_1,t_2,\dots,t_s\}$ of elements of a group
$G=G_f\oplus G_t$, where the $f_i$'s are free and the $t_j$'s are
torsion. In this case, up to changing some $f_i$ by adding some
element of $G_t$ (cf. Remark \ref{rm:quotient}), we can assume
$\{f_1,f_2,\dots,f_r\}\subseteq G_f$. Then we can regard this as a
direct sum of the arithmetic matroid associated to the list
$X_f:=\{f_1,f_2,\dots,f_r\}$ of elements of $G_f$ and
$X_t:=\{t_1,t_2,\dots,t_s\}$ of elements of $G_t$.

Hence in this case
$$
M_X(x,y)=M_{X_f}(x,y)\cdot M_{X_t}(x,y).
$$
\end{Ex}

\section{Geometry of the generalized toric arrangement}

The aim of this section is to explain the geometrical ideas
underlying the combinatorial concepts studied in this paper, and
motivating them. A reader only interested in the combinatorics may
skip this section without affecting the comprehension of what
follows.

\subsection{Generalized tori}

Let $G=G_f\oplus G_t$ be a finitely generated abelian group, where
$G_t$ denotes the torsion subgroup of $G$, and $G_f$ is some free
abelian group, and define
$$T(G):= {Hom(G,\mathbb{C}^*)}.$$
$T(G)$ has a natural structure of abelian linear algebraic group.
In fact it is the direct sum of a complex torus $T(G_f)$ (whose
dimension is the rank as free abelian group of $G_f$) and of the
finite group $T({G_t})$ dual to ${G_t}$ (and isomorphic to it).
Topologically, this is the disjoint union of $|G_t|$ copies of the
torus $T(G_f)$.

Moreover, $G$ is identified with $Hom(T(G),\mathbb{C}^*)$, the
group of characters of $T(G)$: indeed given $\lambda\in G$ and
$t\in T(G)=Hom(G,\mathbb{C}^*)$ we set
$$
\lambda(t):= t(\lambda).
$$

In the same way, we can define
$$T_{\mathbb{R}}(G):= {Hom(G,\mathbb{S}^1)}$$
where we set $\mathbb{S}^1:= \{z\in \mathbb{C}\:|\: |z|=1\}.$ Then
$T_{\mathbb{R}}(G)$ has a natural structure of abelian compact
real Lie group, having $G$ as its group of characters. Again, $G$
is identified with $Hom(T(G),\mathbb{S}^1)$. In fact the functor
$Hom(\:\cdot\:,\mathbb{S}^1)$ gives rise to the so-called
\emph{Potryagin duality}.

When it is not ambiguous, we will denote $T(G)$ by $T$ and
$T_{\mathbb{R}}(G)$ by $T_\mathbb{R}$.

\subsection{Generalized toric arrangements}

Let $X\subseteq G$ be a finite list, spanning a finite index
subgroup of $G$. The kernel of every character $\lambda\in X$ is a subvariety in $T(G)$:
$$
H_\lambda:= \big\{ t\in T \mid \lambda(t)=1 \big\}.
$$
More precisely, $H_\lambda$ is the union of a bunch of connected components of
$T(G)$ if the rank of $\{\lambda\}$ is zero, and a (not
necessarily connected) hypersurface of $T(G)$ if the rank of
$\{\lambda\}$ is one.

The collection $\mathcal{T}(X)=\left\{H_\lambda\mid \lambda\in
X\right\}$ is called the \emph{generalized toric arrangement}
defined by $X$ on $T$.

We denote by $\rx$ the complement of the arrangement in $T$:
$$
\rx:= T\setminus\bigcup_{\lambda\in X} H_\lambda.
$$
We also denote by $\mathcal{C}(X)$ the set of all the connected
components of all the intersections of the subvarieties
$H_\lambda$, ordered by reverse inclusion and having as minimal
elements the connected components of $T$.

Of course we will have similar definitions for $T_\mathbb{R}$. We
will denote with a subscript ``$\mathbb{R}$'' these real
counterparts (e.g. $\rx_{\mathbb{R}}$).

In particular, when $G$ is free, $T$ is a torus and
$\mathcal{T}(X)$ is called the \emph{toric arrangement} defined by
$X$. Such arrangements have been studied for instance in
\cite{DPt}, \cite{Mo}, \cite{ERS}. In particular, the complement $\rx$ has been described
topologically in \cite{MS}, \cite{DD} and geometrically in \cite{Mw}. In this description the poset
$\mathcal{C}(X)$ plays a major role, analogous to that of the
intersection poset for hyperplane arrangements (see \cite{DPt},
\cite{Mw})

\subsection{Relations with the arithmetic Tutte polynomial}

In this subsection we recall some facts, which were proved in
\cite{MoT}.

Given $A\subseteq X$ let us define
$$H_A:= \bigcap_{\lambda \in A}H_\lambda.$$

The following fact is a simple consequence of Pontryagin duality:

\begin{Lem}\label{coco}
$m(A)$ is equal to the number of connected components of $H_A$.
\end{Lem}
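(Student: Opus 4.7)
The plan is to use Pontryagin duality to reduce the geometric claim about connected components of $H_A$ to a purely algebraic statement about the torsion of a quotient of $G$, and then to identify that torsion subgroup using the defining property of $G_A$.

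First, I would rewrite $H_A$ as a Hom-group. By the identification of $G$ with $\text{Hom}(T,\mathbb{C}^*)$ described in Section 4.1, a point $t\in T=\text{Hom}(G,\mathbb{C}^*)$ lies in $H_\lambda$ exactly when $\lambda$ is in the kernel of $t$. Thus $t\in H_A$ iff $t$ vanishes on every $\lambda\in A$, iff $t$ vanishes on $\langle A\rangle$. Consequently $t$ factors through $G/\langle A\rangle$, and the assignment $t\mapsto \bar t$ gives a natural isomorphism of topological abelian groups
$$H_A \;\cong\; \text{Hom}\bigl(G/\langle A\rangle,\mathbb{C}^*\bigr).$$

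Second, I would count components of the right-hand side. For any finitely generated abelian group $H=H_f\oplus H_t$, the $\text{Hom}$ functor splits as
$$\text{Hom}(H,\mathbb{C}^*)\;\cong\;\text{Hom}(H_f,\mathbb{C}^*)\oplus \text{Hom}(H_t,\mathbb{C}^*),$$
the first summand being a complex torus $(\mathbb{C}^*)^{\mathrm{rk}(H_f)}$ (connected) and the second being a finite abelian group non-canonically isomorphic to $H_t$, so in particular of cardinality $|H_t|$. Hence the number of connected components of $\text{Hom}(H,\mathbb{C}^*)$ equals $|H_t|$.

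Third, I would identify the torsion subgroup of $H:=G/\langle A\rangle$. By definition a coset $g+\langle A\rangle$ is torsion in $G/\langle A\rangle$ iff some positive multiple $ng$ lies in $\langle A\rangle$, which is exactly the condition that $g$ belong to a subgroup of $G$ containing $\langle A\rangle$ with finite index; that is, $g\in G_A$, by the maximality in the definition of $G_A$. Therefore the torsion subgroup of $G/\langle A\rangle$ is $G_A/\langle A\rangle$, of order $|G_A:\langle A\rangle|=m(A)$. Combining the three steps yields the number of connected components of $H_A$ equal to $m(A)$. The only step that requires a moment of thought is the identification of the torsion of $G/\langle A\rangle$, but this is immediate from the definition; the rest is a direct application of Pontryagin duality, as the lemma statement anticipates.
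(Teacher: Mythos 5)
Your proof is correct, and it follows exactly the approach the paper indicates: the paper states Lemma \ref{coco} without proof, remarking only that it is ``a simple consequence of Pontryagin duality'' and referring to \cite{MoT}. Your three-step argument (identifying $H_A$ with $\mathrm{Hom}(G/\langle A\rangle,\mathbb{C}^*)$, counting components via the torsion of the argument, and recognizing the torsion subgroup of $G/\langle A\rangle$ as $G_A/\langle A\rangle$) spells out precisely that duality argument.
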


Then the arithmetic Tutte polynomial is deeply related with
generalized toric arrangements, and in fact it was introduced to
study them. We recall some results from \cite{MoT}.

\begin{Thm} \label{thm:Luca}
\begin{enumerate}
  \item The number of connected components of $\rx_\mathbb{R}$ is $M_X(1,0)$.
  \item the Poincar\'{e} polynomial of $\rx$ is $q^n M_X\left(\frac{2q+1}{q},0\right)$.
  \item the characteristic polynomial of $\mathcal{C}(X)$ is $(-1)^n M_X(1-q,0)$.
\end{enumerate}
\end{Thm}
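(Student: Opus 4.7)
The plan is to prove the three statements using M\"obius-theoretic techniques on the poset $\mathcal{C}(X)$, with Lemma \ref{coco} serving throughout as the dictionary between the combinatorial datum $m(A)$ and the topology of $H_A$. All three assertions are specializations at $y=0$, so the basic identity to keep in view is
$$M_X(x,0)=\sum_{A\subseteq X} m(A)(x-1)^{n-rk(A)}(-1)^{|A|-rk(A)},$$
where $n=rk(X)$.

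First I would establish (3). The characteristic polynomial of $\mathcal{C}(X)$ is by definition $\chi(q)=\sum_{C}\mu(\hat{0},C)q^{n-rk(C)}$, where the sum is over all elements of $\mathcal{C}(X)$ and $rk(C)$ is the codimension of $C$ in $T$. By Lemma \ref{coco}, each sublist $A\subseteq X$ contributes exactly $m(A)$ elements of $\mathcal{C}(X)$ of codimension $rk(A)$, all sitting inside $H_A$. A Whitney-type M\"obius inversion, justified by inclusion-exclusion over the lists $A$ that ``cut out'' a given component (axiom (4) guarantees that these inversions produce the correct nonnegative Möbius values), then yields
$$\chi(q)=\sum_{A\subseteq X}(-1)^{|A|} m(A)\, q^{n-rk(A)}.$$
Substituting $x=1-q$ in the boxed identity for $M_X(x,0)$ and comparing signs gives $(-1)^n M_X(1-q,0)=\sum_A(-1)^{|A|} m(A)q^{n-rk(A)}$, so (3) drops out.

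Next I would treat (1) as a toric analogue of Zaslavsky's theorem. The strategy is to compute the number of components of $\mathcal{R}(X)_{\mathbb{R}}$ inductively by deletion and contraction of a proper vector $v\in X$: the deletion yields a coarser chamber decomposition, while the contraction accounts for the regions cut by $H_v$. Since $|T_{\mathbb{R}}|$ is a compact real Lie group whose own Euler characteristic vanishes in positive rank, the recursion assembles into the formula $M_X(1,0)=\sum_{rk(A)=n}(-1)^{|A|-n}m(A)$, which is exactly the specialization $x=1$ of the boxed identity (only the top-rank sublists survive since $0^{n-rk(A)}=0$ otherwise). The base cases are the molecules treated in Section 5; there the formula is transparent.

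Finally for (2) I would use a Goresky--MacPherson / Leray-style description of the cohomology of the complex complement $\mathcal{R}(X)$. Each stratum $C$ of codimension $rk(C)$ in $\mathcal{C}(X)$ contributes, locally, a product of a complex hyperplane-arrangement complement (accounting for a factor $q^{rk(A)}$) with a real torus of the appropriate rank (accounting for a factor $(q+1)^{n-rk(A)}$). Summing these contributions with the correct alternating signs dictated by M\"obius inversion on $\mathcal{C}(X)$ and again invoking Lemma \ref{coco} to replace ``components of $H_A$'' by $m(A)$, one obtains
$$\mathrm{Poin}(\mathcal{R}(X);q)=\sum_{A\subseteq X}(-1)^{|A|-rk(A)} m(A)\,q^{rk(A)}(q+1)^{n-rk(A)},$$
and a direct algebraic manipulation identifies this with $q^n M_X\!\left(\tfrac{2q+1}{q},0\right)$. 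The main obstacle across all three parts is the M\"obius computation on $\mathcal{C}(X)$: since its elements are connected components rather than honest intersections, one must carefully separate the contribution of each component of $H_A$, and this is precisely where axiom (4) (together with Lemma \ref{coco}) is indispensable.
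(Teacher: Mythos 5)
The paper does not actually prove Theorem \ref{thm:Luca}: the text immediately preceding it reads ``We recall some results from \cite{MoT},'' so there is no internal proof to compare your argument against; the theorem is an import from Moci's earlier work. Your proposal is therefore best judged on its own terms, and on those terms the algebraic bookkeeping is fine (each of your three specializations of $M_X(x,0)$ is computed correctly), but the geometric content is not established.

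The most serious gap is in part (1). You observe that $M_X(1,0)=\sum_{rk(A)=n}(-1)^{|A|-n}m(A)$, but this is just reading off the definition of $M_X(x,y)$ at $x=1$, $y=0$; it is not the output of any recursion. The actual content of (1) --- that this number equals the number of connected components of $\mathcal{R}(X)_{\mathbb{R}}$ --- is left entirely to a sentence about deletion-contraction and a remark on the vanishing Euler characteristic of a compact Lie group, neither of which is carried to a conclusion. A Zaslavsky-type proof would need to verify that the region count satisfies $r(X)=r(X\setminus v)+r(X/v)$ for a proper $v$, identify the base case (a molecule) and match it against $M_X(1,0)$; none of this is done.

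In part (3) there is a conceptual confusion. You write that ``each sublist $A$ contributes exactly $m(A)$ elements of $\mathcal{C}(X)$ of codimension $rk(A)$,'' but this over-counts: distinct sublists $A$ very often give rise to the same connected component, so the map from pairs $(A,C)$ to layers $C\in\mathcal{C}(X)$ is many-to-one, and the Whitney-type identity $\chi(q)=\sum_A(-1)^{|A|}m(A)q^{n-rk(A)}$ is precisely the nontrivial statement that the alternating sum $\sum_{A:\,C\text{ is a cc of }H_A}(-1)^{|A|}$ computes $\mu(\hat 0,C)$. This is what needs a crosscut-type argument. The parenthetical about axiom (4) ``guaranteeing nonnegative M\"obius values'' conflates two different objects: axiom (4) controls the quantities $\mu_B(A)$, which (per Lemma \ref{coco} and the discussion in Section 1.4) count connected components of $H_A\setminus\bigcup_{T\supsetneq A}H_T$ and are indeed nonnegative, whereas the poset M\"obius function of $\mathcal{C}(X)$ alternates in sign and is an entirely different animal. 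Part (2) is the most plausible of the three --- the Leray/Goresky--MacPherson route does give the stated Poincar\'e polynomial for toric arrangements --- but as written it is a statement of strategy rather than a proof.
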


Since $rank(G_f)= dim(T)$, the maximal (in the reverse order!)
elements of $\mathcal{C}(X)$ are 0-dimensional, hence (since they
are connected) they are points. We denote by $\mathcal{C}_0(X)$
the set of such elements, which we call the \emph{points} of the
arrangement. For every $p\in\mathcal{C}_0(X)$, let us define
$$X_p:= \left\{\lambda\in X | p\in H_\lambda\right\}.$$

Then we have:

\begin{Lem}\label{aes}
$$M_X(1,y)=\sum_{p\in \mathcal{C}_0(X)} T_{X_p}(1,y).$$
\end{Lem}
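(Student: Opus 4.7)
The plan is to specialize both sides of the identity at $x = 1$ and then match them through a straightforward swap of summation, using Lemma \ref{coco} to identify multiplicities with geometric counts of connected components.

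First, setting $x = 1$ in the arithmetic Tutte polynomial annihilates every summand with $rk(A) < rk(X)$, so
$$M_X(1, y) = \sum_{\substack{A \subseteq X \\ rk(A) = rk(X)}} m(A)\, (y-1)^{|A| - rk(A)},$$
and analogously $T_{X_p}(1, y) = \sum_{A \subseteq X_p,\, rk(A) = rk(X_p)} (y-1)^{|A| - rk(A)}$. I would next observe that for every $p \in \mathcal{C}_0(X)$ the rank $rk(X_p)$ equals $n := rk(X) = \dim T$: indeed $p$ is a $0$-dimensional connected component of $H_{X_p}$, which forces $X_p$ to span a finite-index subgroup of $G_f$.

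With this in hand, I would interchange sums on the right-hand side to obtain
$$\sum_{p \in \mathcal{C}_0(X)} T_{X_p}(1, y) \;=\; \sum_{\substack{A \subseteq X \\ rk(A) = n}} (y-1)^{|A| - rk(A)} \cdot \#\{p \in \mathcal{C}_0(X) : A \subseteq X_p\}.$$
The condition $A \subseteq X_p$ is equivalent to $p \in H_A$, and since $rk(A) = n$ the intersection $H_A$ is $0$-dimensional, so its connected components are exactly the points of $\mathcal{C}_0(X)$ containing $A$ in their $X_p$. By Lemma \ref{coco}, this inner count is precisely $m(A)$, and the resulting sum matches $M_X(1, y)$ term by term.

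The step requiring the most care is the bijection between $\{p \in \mathcal{C}_0(X) : A \subseteq X_p\}$ and the set of connected components of $H_A$ for a full-rank $A$ (together with the observation that every element of $\mathcal{C}_0(X)$ is $0$-dimensional in the first place, which is what justifies $rk(X_p)=n$); once this geometric dictionary is set up, everything else is formal manipulation.
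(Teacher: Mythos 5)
Your proof is correct. Note that the paper itself does not prove Lemma \ref{aes}; it is stated (together with Lemma \ref{coco} and Theorem \ref{thm:Luca}) as a recollection of results from the reference \cite{MoT}, so there is no in-paper proof to compare against. Your argument is the natural one and is what the cited source does: setting $x=1$ kills all summands with $rk(A)<rk(X)$ on both sides, and after interchanging the order of summation the identity reduces to the statement that, for a full-rank $A\subseteq X$, the number of points $p\in\mathcal{C}_0(X)$ with $A\subseteq X_p$ equals $m(A)$, which is exactly Lemma \ref{coco} once one observes that those $p$ are precisely the connected components of $H_A$.

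The two geometric points you flag as requiring care are indeed the essential ones, and both hold: since $H_B$ is a union of cosets of the subgroup $\bigcap_{\lambda\in B}\ker\lambda$, it is equidimensional of dimension $\dim T - rk(B)$; applied to $B=X_p$ this forces $rk(X_p)=\dim T=rk(X)$ because $\{p\}$ is a $0$-dimensional component of $H_{X_p}$, and applied to a full-rank $A$ it shows $H_A$ is a finite set of points, each of which is a maximal element of $\mathcal{C}(X)$ (a singleton is trivially maximal in the reverse-inclusion order) and hence belongs to $\mathcal{C}_0(X)$. Conversely any $p\in\mathcal{C}_0(X)$ with $p\in H_A$, i.e.\ $A\subseteq X_p$, is a connected component of $H_A$. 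This gives the bijection you need, completing the interchange of summation.
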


This lemma will be the starting point of our combinatorial
interpretation, as we will explain in the next section.

\section{Towards the combinatorial interpretation}

\subsection{General considerations}

We want to give a combinatorial interpretation of the arithmetic
Tutte polynomial. Let us look at a very easy but already
nontrivial example.
\begin{Ex} \label{ex:1}
Consider the list $$X=\{v_1:=(1,1),v_2:=(1,-1)\}\subseteq
\mathbb{Z}^2.$$ In this case we only have two free vectors, which
together form the only basis of the matroid $\mathfrak{M}_X$. Here
the multiplicity function is given by $m(X)=2$ and
$m(\emptyset)=m(\{v_1\})=m(\{v_2\})=1$.

If we compute the polynomial $M_X(x,y)$ using the defining formula
we get
$$
M_X(x,y)=x^2+1.
$$

Notice that the dual matroid $\mathfrak{M}_{X}^*$ is a rank $0$
matroid, whose only basis is the empty list. Moreover in the dual
arithmetic matroid $(\mathfrak{M}_{X}^*,m^*)$ the empty list has
multiplicity $2$.
\end{Ex}

We start our analysis with some general considerations. First of
all we observe that specializing at $x=1$ and $y=1$ the polynomial
$M_X(x,y)$ we get the sum of the multiplicities of the bases of
the matroid. Notice also that the bases in the matroid
$\mathfrak{M}_X$ correspond bijectively with the bases of
$\mathfrak{M}_{X}^*$ under the involution of complementing with
respect to $X$, and in fact by definition the multiplicity of a
basis in $(\mathfrak{M}_X,m)$ is the same as the multiplicity of
the complement in the dual $(\mathfrak{M}_{X}^*,m^*)$.

Hence, keeping in mind what Crapo did with the Tutte polynomial,
it is natural to try to interpret the polynomial
$\mathfrak{M}_X(x,y)$ as a sum over the bases counted with
multiplicity of monomials in $y$ and $x$, whose exponents give
some statistics on the basis and its complement (in the dual)
respectively.

Already in the very simple Example \ref{ex:1} we see one of the
difficulties of our task: in this example we only have one basis
counted with multiplicity $2$ both in $(\mathfrak{M}_{X},m)$ and
in $(\mathfrak{M}_{X}^*,m^*)$, but the two monomials are distinct!

The problem here is that the monomials of $M_X(x,y)$ are counted
by a list and not just a set. Moreover, apparently for identical
elements of the list the statistic may differ.

It turns out that a key ingredient for the understanding of the
combinatorics behind the polynomial $M_X(x,y)$ is a suitable list
of maximal rank sublists of $X$. The geometric considerations
exposed in the previous section suggested us to look at them in
the first place.

\subsection{Two fundamental lists}

Starting with our arithmetic matroid $(\mathfrak{M}_X,m)$, we
construct a list $L_X$ of maximal rank sublists of $X$ in the
following way.

To every maximal rank sublist $S$ of $X$ we associate the
nonnegative (axiom (4)) integer
$$
\mu(S):=\sum_{T\supseteq S}(-1)^{|T|-|S|}m(T).
$$

Then the list $L_X$ is defined as the list in which each maximal
rank sublist $S$ appears $\mu(S)$ many times.

Notice that if we extract the bases from our list $L_X$, each
basis $B$ will show up exactly $m(B)$ times: in fact, by
inclusion-exclusion, each basis $B$ will appear $\sum\mu(T)=m(B)$
times, where the sum is taken over the sublists $T$ that contain
$B$.

Dually, we construct the list $L_{X}^*$ in the same way from the
dual arithmetic matroid $(\mathfrak{M}_{X}^*,m^*)$.

\begin{Ex}
We consider again the Example \ref{ex:1}. In this case we have
$L_X=(X,X)$, while $L_{X}^*=(X,\emptyset)$.
\end{Ex}

We introduce the following notation. The list of pairs $(B,T)$,
where $B$ is a basis, $B\subseteq T$ and $T\in L_X$, counted with
multiplicity $\mu(T)$, will be denoted by $\mathcal{B}$. The
corresponding list in the dual will be denoted by $\mathcal{B}^*$.

\subsection{Local external activity}

We already observed that the multiplicity of the basis $B$ in
$\mathfrak{M}_{X}$ is the same as the multiplicity of the basis
$B^c$ in $\mathfrak{M}_{X}^*$. So it is now natural to interpret
the polynomial $M_X(x,y)$ as a sum over the elements of
$\mathcal{B}$ of monomials in $x$ and $y$:

For every such pair $(B,T)$ we define the statistic $e(B,T)$ to be
the \textit{local external activity} of the basis $B$ \textit{in}
the list $T$, i.e. the number of elements of $T\setminus B$ that
are externally active on $B$. Notice that the torsion elements of
$T$ are always active (if you don't want to deal with the empty
list, this is a convention). Dually, we define
$e^*(B^c,\widetilde{T})$ in the same way for the basis $B^c$ in
the dual and $B^c\subseteq \widetilde{T}\in L_{X}^*$.

More explicitly we would like to see $M_X(x,y)$ as $\sum
x^{e^*(B^c,\widetilde{T})}y^{e(B,T)}$.

\begin{Ex}
We consider again the Example \ref{ex:1}. In this case we have two
identical pairs $(X,X)$ in the original arithmetic matroid, where
obviously $e(X,X)=0$, while in the dual we have two distinct pairs
$(\emptyset,X)$ and $(\emptyset,\emptyset)$, where
$e^*(\emptyset,X)=2$ and $e^*(\emptyset,\emptyset)=0$.

In fact in this case the polynomial $M_X(x,y)$ is $x^2+1$.
\end{Ex}

\begin{Rem}
This definition of local external activity is motivated by Lemma \ref{aes}.

Indeed, this lemma tells us that the exponents of $y$ are the
external activities of the bases computed in the lists $X_p$,
hence they are the local external activities $e(B,T)$. Therefore,
at least for a representable arithmetic matroid, we have:

$$M_X(1,y)=\sum_{(B,T)\in \mathcal{B}} y^{e(B,T)}.$$

Furthermore, since the dual of a representable matroid is still
representable, we have a dual toric arrangement. The same
considerations then allow to conclude that
$$M_X(x,1)=\sum_{(B^c,\widetilde{T})\in \mathcal{B}^*} x^{e^*(B^c,\widetilde{T})}.$$
\end{Rem}

In order to conclude our construction we need to face a nontrivial
problem.

\subsection{The matching problem}

The problem here is again that we have a list of pairs and not
just a set. So a pair $(B,T)$ can appear several times, as we have
seen in the example above, and it needs to be matched with a
suitable pair $(B^c,\widetilde{T})$. In the last example we didn't
have the problem of the matching since the statistic for the $y$
was always $0$. But in general there could be many choices.

In general we have the problem of matching a pair $(B,T)$ with a
suitable pair $(B^c,\widetilde{T})$.

\section{The molecular case}

In this section we consider the special case of a molecule, i.e.
of an arithmetic matroid $\mathfrak{M}$ in which there are no
proper vectors (see Section \ref{s-mole}). Hence $\mathfrak{M}$ is given by a list $X$ of the
form
$$X=\{f_1,f_2,\dots,f_r,t_1,t_2,\dots,t_s\}$$ where the $f_i$'s are
free vectors, and the $t_j$'s are torsion vectors.

We want to find a combinatorial interpretation of the polynomial
$M_X(x,y)$ in this case. As we have seen in the previous section,
we have the problem of matching the pairs $(B,T)\in \mathcal{B}$,
where $B$ is a basis, $B\subseteq T$ and $T\in L_X$, with the
pairs $(B^c,\widetilde{T})\in \mathcal{B}^*$, where $B^c$ is of
course a basis in the dual, $B^c\subseteq \widetilde{T}$ and
$\widetilde{T}\in L_{X}^*$.

In the special case that we are considering the matching will be
done in the following way.

The idea is that we want to match the copies of a pair $(B,T)\in
\mathcal{B}$ \textit{evenly} among the copies of pairs
$(B^c,\widetilde{T})\in \mathcal{B}^*$, and viceversa. With this
we mean the following. Let $\ell((B,T),(B^c,\widetilde{T}))$ be
the number of copies of $(B,T)$ that we match with copies of
$(B^c,\widetilde{T})$. Than for distinct $T_1$ and $T_2$ in $L_X$
we want that
$$
\frac{\mu(T_1)}{\mu(T_2)}=\frac{\ell((B,T_1),(B^c,\widetilde{T}))}{\ell((B,T_2),(B^c,\widetilde{T}))}
$$
for every $\widetilde{T}\in L_{X}^*$. Dually, we want also
$$
\frac{\ell((B,T),(B^c,\widetilde{T}_1))}{\ell((B,T),(B^c,\widetilde{T}_2))}=\frac{
\mu^*(\widetilde{T}_1)}{\mu^*(\widetilde{T}_2)}
$$
for distinct $\widetilde{T}_1$ and $\widetilde{T}_2$ in $L_{X}^*$,
and for every $T\in L_X$.

We call this property \textit{equidistribution} of the matching.
Before showing that this is in fact possible, we assume that we
can do that, and we make some remarks.

Let us call $\psi=\psi_B$ such a bijection between $\mathcal{B}$
and $\mathcal{B}^*$. Then notice that this bijection is in fact
unique up to identification of the copies of the same pairs.

Let us set
$$
\overline{M}_X(x,y):=\sum_{(B^c,\widetilde{T})\in
\mathcal{B}^*}x^{e^*(B^c,\widetilde{T})}y^{e(\psi^{-1}(B^c,\widetilde{T}))}.
$$

Now notice that the equidistribution property implies that this
polynomial is in fact a product of two polynomials, one in $x$ and
one in $y$. In fact we have that $\overline{M}_B(x,y)\cdot
\overline{M}_{B^c}(x,y)$ is an integer multiple of
$\overline{M}_X(x,y)$, where $\overline{M}_B(x,y)$ is a polynomial
in $x$, since clearly there is no external activity on $B$, and
$\overline{M}_{B^c}(x,y)$ is a polynomial in $y$, since clearly
there is no external activity in the dual (cf. Example
\ref{ex:basetorsion}).

\begin{Rem}
Suppose that our arithmetic matroid is representable in a group
$G=G_f\oplus G_t$. If we denote by $M_{X_f}(x,y)$ the arithmetic
Tutte polynomial of the list $X_f:=\{f_1,f_2,\dots, f_r\}$ of
elements of $G_f$, and by $M_{X_t}(x,y)$ the arithmetic Tutte
polynomial of the list $X_t:=\{t_1,t_2,\dots,t_s\}$ of elements of
$G_t$, then we already observed in Example \ref{ex:molecule} that
$$
M_{X}(x,y)=M_{X_f}(x,y) \cdot M_{X_t}(x,y).
$$
In fact, in this case $M_B(x,y)=m(\emptyset)\cdot M_{X_f}(x,y)$
while $M_{B^c}(x,y)=M_{X_f}(x,y)$, so that $M_B(x,y)\cdot
M_{B^c}(x,y)=m(\emptyset)\cdot M_X(x,y)$ (cf. Example
\ref{ex:basetorsion}).
\end{Rem}

We can now give our combinatorial interpretation of the arithmetic
Tutte polynomial in the molecular case.
\begin{Thm} \label{thm:specialcase}
If $(\mathfrak{M}_X,m)$ is an arithmetic matroid with no proper
vectors, then
$$
M_X(x,y)=\overline{M}_X(x,y)=\sum_{(B^c,\widetilde{T})\in
\mathcal{B}^*}x^{e^*(B^c,\widetilde{T})}y^{e(\psi^{-1}(B^c,\widetilde{T}))}.
$$
\end{Thm}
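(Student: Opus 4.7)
The plan is to establish the product identity
\[
M_X(x,y) \;=\; \frac{M_X(x,1)\, M_X(1,y)}{M_X(1,1)}
\]
for any molecule, and then to show that $\overline{M}_X(x,y)$ factors in exactly the same way. Write $X = X_f \sqcup X_t$ with $X_f = \{f_1,\dots,f_r\}$ the free part, $X_t = \{t_1,\dots,t_s\}$ the torsion part, and decompose any $A \subseteq X$ as $A = A_f \sqcup A_t$ accordingly. First I apply axiom~(3) of the multiplicity function with $A := \emptyset$, $B := A_f \sqcup A_t$, $F := A_f$, $T := A_t$; the hypothesis $rk(C) = |C \cap A_f|$ for every $C \subseteq A$ holds because every element of $X_t$ is a loop. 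The conclusion is the multiplicative decomposition
\[
m(A) \;=\; \frac{m(A_f)\, m(A_t)}{m(\emptyset)}.
\]
Substituting into \eqref{eq:aritutte} and separating the two sums factors $M_X(x,y) = \frac{1}{m(\emptyset)}F(x)G(y)$ with $F(x) := \sum_{A_f \subseteq X_f} m(A_f)(x-1)^{r-|A_f|}$ and $G(y) := \sum_{A_t \subseteq X_t} m(A_t)(y-1)^{|A_t|}$. Specializing at $y=1$ gives $G(1) = m(\emptyset)$ so that $F(x) = M_X(x,1)$; specializing at $x=1$ gives $F(1) = m(X_f)$ so that $G(y) = \frac{m(\emptyset)}{m(B)}\,M_X(1,y)$, where $B := X_f$ is the unique basis and $m(B) = M_X(1,1)$. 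Combining these yields the factorization.

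Next I identify the two univariate specializations with the combinatorial sums of Section 5. For every $(B,T) \in \mathcal{B}$ the set $T \setminus B$ lies in $X_t$, hence consists of torsion vectors, which by convention are externally active; so $e(B,T) = |T|-r$, and symmetrically $e^*(B^c,\widetilde{T}) = |\widetilde{T}|-s$ for $(B^c,\widetilde{T}) \in \mathcal{B}^*$. M\"obius inversion on the Boolean lattice of subsets of $X_t$, together with the binomial identity $\sum_{A_t \subseteq S_t}(y-1)^{|A_t|} = y^{|S_t|}$, then gives
\[
M_X(1,y) \;=\; \sum_{A_t \subseteq X_t} m(B \cup A_t)(y-1)^{|A_t|} \;=\; \sum_{T \in L_X,\, T \supseteq B} \mu(T)\, y^{|T|-r} \;=\; \sum_{(B,T)\in\mathcal{B}} y^{e(B,T)},
\]
and symmetrically $M_X(x,1) = \sum_{(B^c,\widetilde{T}) \in \mathcal{B}^*} x^{e^*(B^c,\widetilde{T})}$. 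The equidistribution property of $\psi$ forces the number of copies of $(B,T)$ matched with copies of $(B^c,\widetilde{T})$ to equal $\mu(T)\mu^*(\widetilde{T})/m(B)$, so
\[
\overline{M}_X(x,y) \;=\; \frac{1}{m(B)}\sum_{T,\widetilde{T}} \mu(T)\mu^*(\widetilde{T})\, x^{e^*(B^c,\widetilde{T})} y^{e(B,T)} \;=\; \frac{M_X(x,1)\, M_X(1,y)}{m(B)} \;=\; M_X(x,y),
\]
which is the desired identity.

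The main obstacle, and the real content of the argument, is the very first step: the multiplicative decomposition of $m$ is precisely what axiom~(3) is designed to enforce in the molecular configuration. Once it is in hand, everything else is essentially bookkeeping: M\"obius inversion on a Boolean lattice, the binomial identity, and the equidistribution of $\psi$ (whose existence is taken from the preceding section). Conceptually, the proof confirms that at the level of arithmetic matroids a molecule behaves as the ``direct sum'' of its free and torsion parts (cf.\ Example~\ref{ex:molecule}), and that the combinatorial interpretation $\overline{M}_X$ factors accordingly along this split.
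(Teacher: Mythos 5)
Your proof is correct, and it takes a genuinely different route from the paper's. The paper proves Theorem~\ref{thm:specialcase} by establishing deletion--contraction recursions for $M_X(x,y)$ and $\overline{M}_X(x,y)$ with respect to free and torsion vectors, and then reducing to the empty list, where both polynomials equal $m(\emptyset)$. You instead prove a closed-form factorization. The key observation --- that axiom~(3) applied to $\emptyset \subseteq A_f \sqcup A_t$ with $F = A_f$, $T = A_t$ yields the multiplicative decomposition $m(A) = m(A_f)\,m(A_t)/m(\emptyset)$ for every sublist of a molecule --- is exactly what makes the defining sum split into $F(x)G(y)/m(\emptyset)$, and it is a clean way of expressing the fact (implicit in the paper's Example~\ref{ex:molecule}, stated there only in the representable case) that a molecule is a direct sum of its free and torsion parts. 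Your verification that equidistribution forces $\ell\bigl((B,T),(B^c,\widetilde{T})\bigr) = \mu(T)\mu^*(\widetilde{T})/m(B)$ is also correct, and it recovers the paper's explicit formula $\ell = h\cdot a(T)\cdot a^*(\widetilde{T})$ after unwinding the notation. One small point worth noting: you rely on the existence of the equidistributed matching $\psi$, which the paper proves just before the theorem by showing the relevant divisibility; your argument does not re-derive this, so it is worth stating explicitly that you are taking existence (and hence integrality of the $\ell$'s) as already established. Comparing the two approaches, yours is shorter and more conceptual for the molecular case in isolation, while the paper's recursive scheme has the advantage that the same deletion--contraction machinery, extended to proper vectors, drives the proof of the general case (Theorem~\ref{thm:generalcase}); your factorization does not generalize directly beyond molecules, so the recursion would still be needed there.
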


Before proving this theorem, we show an example.

\begin{Ex} \label{ex:basetorsion}
Let
$$X=\{a:=(1,2,\overline{0}),b:=(2,0,\overline{1}),c:=(0,0,\overline{2}),
d:=(0,0,\overline{3})\}\subseteq G:=\mathbb{Z}^2\oplus
\mathbb{Z}/6\mathbb{Z}.$$ We have no proper vectors, so the only
basis is $B=\{a,b\}$, while $c$ and $d$ are two torsion vectors.

A straightforward computation shows that
\begin{eqnarray*}
M_X(x,y) & = & 4+6y+2x+2x^2+3x^2y+3xy+2y^2+x^2y^2+xy^2\\
 & = & (y+2)(y+1)(x^2+x+2).
\end{eqnarray*}
Observe also that $M_{\{a,b\}}(x,y)=6(x^2+x+2)$, while
$M_{\{c,d\}}(x,y)=(y+2)(y+1)$, so their product is a multiple of
(in fact $m(\emptyset)=6$ times) $M_X(x,y)$ as it should be.

To compute the multiplicities we look at the matrix
$$
\left(%
\begin{array}{ccc}
  1 & 2 & 0 \\
  2 & 0 & 1 \\
  0 & 0 & 2 \\
  0 & 0 & 3 \\
  0 & 0 & 6 \\
\end{array}%
\right)
$$
whose rows are representatives of the elements of $X$ in
$\mathbb{Z}^3$ together with the vector $q:=(0,0,6)$, where we
think of $G=\mathbb{Z}^2\oplus \mathbb{Z}/6\mathbb{Z}$ as
$\mathbb{Z}^3/\langle q\rangle$. Following Remark \ref{rm:GCD}, we
can compute the multiplicities of $A\subseteq X$ by looking at the
greatest common divisor of the nonzero minors of maximal rank that
we can extract from the corresponding rows of our matrix, together
with the last row $q$.

We have $m(\emptyset)=6$, $m(\{a\})=6$, $m(\{b\})=12$,
$m(\{c\})=2$, $m(\{d\})=3$, $m(\{a,b\})=24$, $m(\{a,c\})=2$,
$m(\{a,d\})=3$, $m(\{b,c\})=4$, $m(\{b,d\})=6$, $m(\{c,d\})=1$,
$m(\{a,b,c\})=8$, $m(\{a,b,d\})=12$, $m(\{a,c,d\})=1$,
$m(\{b,c,d\})=2$, $m(X)=4$.

To construct the list $L_X$, we look at the maximal rank sublists
of $X$, and we compute their $\mu$'s. So first take
$\mu(X)=m(X)=4$ copies of $X$. Then we take
$\mu(\{a,b,c\})=m(\{a,b,c\})-\mu(X)=8-4=4$ copies of $\{a,b,c\}$,
$\mu(\{a,b,d\})=m(\{a,b,d\})-\mu(X)=12-4=8$ copies of $\{a,b,d\}$,
and finally
$\mu(\{a,b\})=m(\{a,b\})-(\mu(X)+\mu(\{a,b,c\})+\mu(\{a,b,d\}))=24-16=8$
copies of $\{a,b\}$. Hence, using an exponential notation for the
number of copies of an element in a list, we have
$L_X=(X^4,\{a,b,c\}^4,\{a,b,d\}^{8},\{a,b\}^8)$.

Doing the same procedure in the dual, we get
$L_{X}^*=(X^6,\{a,c,d\}^6,\{c,d\}^{12})$.

Let us compute the external activities of the pairs of
$\mathcal{B}$ and $\mathcal{B}^*$. Here it is extremely easy,
since all the elements that are not in the basis are active. Hence
$e(\{a,b\},X)=2$, $e(\{a,b\},\{a,b,c\})=e(\{a,b\},\{a,b,d\})=1$
and $e(\{a,b\},\{a,b\})=0$, while $e^*(\{c,d\},X)=2$,
$e^*(\{c,d\},\{a,c,d\})=1$ and finally $e^*(\{c,d\},\{c,d\})=0$.

Now the bijection $\psi$: we have to equidistribute the pairs in
$\mathcal{B}$ with the pairs in $\mathcal{B}^*$. Let us call
$\ell((B,T),(B^c,\widetilde{T}))$ the number of copies of
$(B,T)\in \mathcal{B}$ that needs to be matched with the same
amount of copies of $(B^c,\widetilde{T})\in \mathcal{B}^c$. Then
we can only have
\begin{eqnarray*}
\ell((\{a,b\},X),(\{c,d\},\{c,d\})) & = &
\ell((\{a,b\},\{a,b,c\}),(\{c,d\},\{c,d\}))\\
 & = & \ell((\{a,b\},\{a,b,d\}),(\{c,d\},\{c,d\}))\\
 & = & \ell((\{a,b\},\{a,b\}),(\{c,d\},\{c,d\}))=2
\end{eqnarray*}
and all the others equal to $1$. So our polynomial will be
$$
\overline{M}_X(x,y)=4+6y+2x+2x^2+3x^2y+3xy+2y^2+x^2y^2+xy^2=M_X(x,y)
$$
as it should be.
\end{Ex}

Let us see why such a bijection $\psi=\psi_B$ should always
exists.

Notice that in this case we have only one basis
$B:=\{f_1,f_2,\dots,f_r\}$. Moreover, by axiom (3) for $m$, we
have that $m(B)\cdot m(B^c)=m(\emptyset)\cdot m(X)$, and hence
$m(B)$ divides $m(\emptyset)\cdot m(X)$.

Also $m(X)$ divides $m(S)$ for every $S\supseteq B$, which are
exactly the maximal rank sublists of $X$. In particular we have
$m(B)=m(X)\cdot c(B)$ for some $c(B)$.

In fact $m(X)$ divides each $\mu(S)$: recursively it divides each
$\mu(T)$ for every $T\supsetneqq S$, and it divides $m(S)$, hence
it divides $\mu(S)=m(S)-\sum_{T\supsetneqq S}\mu(T)$.

Say that $\mu(T)=m(X)\cdot a(T)$, where $a(T)\in
\mathbb{N}\cup\{0\}$. Moreover
$$
m(B)=\sum_{T\supseteq B}\mu(T)=m(X)\cdot \left(\sum_{T\supseteq
B}a(T)\right)=m(X)\cdot c(B),
$$
so that the $a(T)$'s give a partition of $c(B)$. But since
$m(B)=m(X)\cdot c(B)$ divides $m(X)\cdot m(\emptyset)$, we have
that $c(B)$ divides $m(\emptyset)$.

Dually, we have that $B^c$ is the only basis, so analogously
$m(\emptyset)=m^*(X)$ divides every $m^*(\widetilde{T})$ and hence
every
$$\mu^*(\widetilde{T}):=m^*(\widetilde{T})-\sum_{\widetilde{S}\supsetneqq
\widetilde{T}}(-1)^{|\widetilde{S}|-|\widetilde{T}|}m^*(\widetilde{S})$$
for every $\widetilde{T}\supseteq B^c$, which are the maximal rank
sublists in the dual. In particular
$m^*(B^c)=m(B)=m(\emptyset)\cdot c^*(B)=m^*(X)\cdot c^*(B)$ for
some $c^*(B)$. Say $\mu^*(\widetilde{T})=m^*(X)\cdot
a^*(\widetilde{T})=m(\emptyset)\cdot a^*(\widetilde{T})$.

Then again
$$
m(B)=m^*(B^c)=\sum_{\widetilde{T}\supseteq
B^c}\mu^*(\widetilde{T})=m^*(X)\cdot
\left(\sum_{\widetilde{T}\supseteq
B^c}a^*(\widetilde{T})\right)=m^*(X)\cdot c^*(B),
$$
which divides $m^*(X)\cdot m^*(\emptyset)=m(\emptyset)\cdot m(X)$
so that the $a^*(\widetilde{T})$'s give a partition of $c^*(B)$
which divides $m(X)=m^*(\emptyset)$.

We are now in a position to define an equidistributed matching of
the pairs in $\mathcal{B}$ with the pairs in $\mathcal{B}^*$ in
this case.

Consider a pair $(B,T)\in \mathcal{B}$, which appears with
multiplicity $\mu(T)=m(X)\cdot a(T)\neq 0$, so $a(T)\geq 1$. Now
each pair $(B^c,\widetilde{T})$ in $\mathcal{B}^*$ appears with
multiplicity $\mu^*(\widetilde{T})=m(\emptyset)\cdot
a^*(\widetilde{T})\neq 0$, so that $a^*(\widetilde{T})\geq 1$, and
the sum of these multiplicity is $m(\emptyset)\cdot c^*(B)$. Since
$c^*(B)$ divides $m(X)$, we can match our $m(X)\cdot a(T)$ many
pairs $(B,T)$ evenly among the copies of the pairs
$(B^c,\widetilde{T})$. More precisely, if $m(X)=h\cdot c^*(B)$,
then we match $h\cdot a(T)\cdot a^*(\widetilde{T})$ many copies of
$(B,T)$ with $(B^c,\widetilde{T})$, and this for all
$(B^c,\widetilde{T})\in \mathcal{B}^*$. We set
$$
\ell((B,T),(B^c,\widetilde{T})):=h\cdot a(T)\cdot
a^*(\widetilde{T}).
$$
Observe now that for distinct $T_1,T_2\in L_X$ and for any
$\widetilde{T}\in L_{X}^*$ we have
$$
\frac{\mu(T_1)}{\mu(T_2)}=\frac{m(X)\cdot a(T_1)}{m(X)\cdot
a(T_2)}=\frac{h\cdot a^*(\widetilde{T})\cdot a(T_1)}{h\cdot
a^*(\widetilde{T})\cdot a(T_2)}
=\frac{\ell((B,T_1),(B^c,\widetilde{T}))}{\ell((B,T_2),(B^c,\widetilde{T}))};
$$
also, for distinct $\widetilde{T}_1,\widetilde{T}_2\in L_{X}^*$
and for any $T\in L_X$ we have
$$
\frac{\mu^*(\widetilde{T}_1)}{\mu^*(\widetilde{T}_2)}=\frac{m(\emptyset)\cdot
a^*(\widetilde{T}_1)}{m(\emptyset)\cdot
a^*(\widetilde{T}_2)}=\frac{h\cdot a(T)\cdot
a^*(\widetilde{T}_1)}{h\cdot a(T)\cdot a^*(\widetilde{T}_2)}
=\frac{\ell((B,T),(B^c,\widetilde{T}_1))}{\ell((B,T),(B^c,\widetilde{T}_2))}.
$$
This shows that the matching $\psi=\psi_B$ that we just defined is
equidistributed.

Now that we have a good definition of our polynomial
$\overline{M}_X(x,y)$, we can turn our attention to the proof of
Theorem \ref{thm:specialcase}.

In order to prove the theorem, we will resort to a
\textit{deletion-contraction} recursion for the polynomial
$M_X(x,y)$.
\begin{Lem}
If $(\mathfrak{M}_X,m)$ is an arithmetic matroid with no proper
vectors, and $v\in X$ is a free vector, then
$$
M_X(x,y)=(x-1)M_{X_1}(x,y)+M_{X_2}(x,y),
$$
where $M_{X_1}(x,y)$ and $M_{X_2}(x,y)$ denote the arithmetic
Tutte polynomial associated to the deletion and the contraction
arithmetic matroid with respect to $v$, respectively.
\end{Lem}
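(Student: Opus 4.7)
The plan is to prove this by a direct splitting of the defining sum for $M_X(x,y)$ according to whether a subset $A \subseteq X$ contains $v$ or not. The two resulting pieces should line up cleanly with $(x-1) M_{X_1}(x,y)$ and $M_{X_2}(x,y)$ once we exploit the fact that $v$ is a free vector.

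First I would record the rank-theoretic consequences of $v$ being free. By definition, $rk(X \setminus \{v\}) = rk(X) - 1$ and $rk_2(X \setminus \{v\}) = rk(X) - 1$; combined with $rk_2(X \setminus \{v\}) = rk(X) - rk(\{v\})$, this forces $rk(\{v\}) = 1$. Moreover, by submodularity applied to any $A' \subseteq X_1$ and $X \setminus \{v\}$, the identity $rk(X) = rk(X \setminus \{v\}) + 1$ propagates down to give $rk(A' \cup \{v\}) = rk(A') + 1$ for every $A' \subseteq X_1$. So $v$ is independent from every sublist of $X_1$.

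Next I would split the sum
$$M_X(x,y) = \sum_{A \subseteq X} m(A)(x-1)^{rk(X) - rk(A)}(y-1)^{|A| - rk(A)}$$
into the subsum over $A \subseteq X_1$ and the subsum over $A$ containing $v$. For the first piece, $m(A) = m_1(A)$ by definition of deletion, $rk(A) = rk_1(A)$, and $rk(X) - rk(A) = (rk_1(X_1) + 1) - rk_1(A)$, so this subsum factors as $(x-1) M_{X_1}(x,y)$. For the second piece, write $A = A' \cup \{v\}$ with $A' \subseteq X_2 = X_1$; then $m(A) = m_2(A')$ by definition of contraction, $|A| = |A'| + 1$, and using $rk(A' \cup \{v\}) = rk(A') + 1 = rk_2(A') + rk(\{v\}) + 0$, one checks $rk(X) - rk(A) = rk_2(X_2) - rk_2(A')$ and $|A| - rk(A) = |A'| - rk_2(A')$. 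Hence this subsum equals $M_{X_2}(x,y)$.

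Adding the two pieces gives the claimed identity. The only non-routine point is the rank step $rk(A' \cup \{v\}) = rk(A') + 1$ for arbitrary $A' \subseteq X_1$, which is the concrete content of freeness and must be handled before the exponent bookkeeping can be read off. Note that the hypothesis that $(\mathfrak{M}_X,m)$ has no proper vectors plays no direct role in this computation; it is the setting in which this deletion–contraction relation will be applied to reduce the molecular case via induction on the number of free vectors.
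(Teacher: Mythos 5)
Your proof follows essentially the same approach as the paper's: split the defining sum according to whether $v$ belongs to $A$, identify the $v\notin A$ piece with $(x-1)M_{X_1}(x,y)$ and the $v\in A$ piece with $M_{X_2}(x,y)$, using the rank shift $rk(A'\cup\{v\})=rk(A')+1$. You are in fact slightly more careful than the paper, which asserts without justification that $rk_1(A\setminus\{v\})=rk(A)-1$ for every $A\ni v$, whereas you supply the submodularity argument needed to propagate the freeness condition from the top level down to an arbitrary $A'\subseteq X_1$; you also correctly observe that the ``no proper vectors'' hypothesis is not actually used in this particular lemma.
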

Before proving the lemma, let us see an example.
\begin{Ex}
If we compute the arithmetic Tutte polynomials of the deletion and
the contraction of the vector $b$ in the Example
\ref{ex:basetorsion} we get
$$
M_{X_1}(x,y)=2x+3xy+xy^2=(y+2)(y+1)x
$$
and
$$
M_{X_2}(x,y)=4+6y+4x+6xy+2y^2+2xy^2=2(y+2)(y+1)(x+1).
$$
So
$$
(x-1)M_{X_1}(x,y)+M_{X_2}(x,y)=(y+2)(y+1)(x^2+x+1)=M_X(x,y)
$$
as it should be.
\end{Ex}
\begin{proof}[proof of the Lemma]
For $v\in A\subseteq X$ we have $rk_1(A\setminus
\{v\})=rk_2(A\setminus \{v\})=rk(A)-1$, where $rk_1$ and $rk_2$
are the rank functions of the deletion and the contraction by $v$
respectively, since $v$ is free. Hence
\begin{eqnarray*}
M_X(x,y) & = & \sum_{A\subseteq
X}m(A)(x-1)^{rk(X)-rk(A)}(y-1)^{|A|-rk(A)}\\
 & = & \sum_{v\notin A\subseteq
X}m(A)(x-1)^{rk(X)-rk(A)}(y-1)^{|A|-rk(A)}\\
 & + & \sum_{v\in A\subseteq
X}m(A)(x-1)^{rk(X)-rk(A)}(y-1)^{|A|-rk(A)}\\
 & = & \sum_{v\notin A\subseteq
X}m_1(A)(x-1)^{rk_1(X\setminus \{v\})+1-rk_1(A)}(y-1)^{|A|-rk_1(A)}\\
 & + & \sum_{v\in
A\subseteq X}m_2(A\setminus \{v\})(x-1)^{rk_2(X\setminus
\{v\})-rk_2(A\setminus
\{v\})}(y-1)^{|A\setminus \{v\}|-rk_2(A\setminus \{v\})}\\
 & = & (x-1)M_{X_1}(x,y)+M_{X_2}(x,y).
\end{eqnarray*}
\end{proof}
Notice that dually we have the following immediate corollary.
\begin{Lem}
If $(\mathfrak{M}_X,m)$ is an arithmetic matroid with no proper
vectors, and $v\in X$ is a torsion vector, then
$$
M_X(x,y)=M_{X_1}(x,y)+(y-1)M_{X_2}(x,y),
$$
where $M_{X_1}(x,y)$ and $M_{X_2}(x,y)$ denote the arithmetic
Tutte polynomial associated to the deletion and the contraction
arithmetic matroid with respect to $v$, respectively.
\end{Lem}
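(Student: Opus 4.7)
The plan is to deduce this identity from the previous lemma (for free vectors in a molecule) by a duality argument. The two inputs I would use are: (i) by Remark \ref{FTP}, a vector $v$ is torsion in $\mathfrak{M}_X$ if and only if it is free in $\mathfrak{M}_X^*$, and the dual of a molecule is again a molecule, so the previous lemma applies to $v$ viewed in $\mathfrak{M}_X^*$; (ii) deletion and contraction with respect to $v$ swap under dualization (observed at the end of Section 3.3), so that the deletion of $v$ in $\mathfrak{M}_X^*$ is the dual of the contraction $\mathfrak{M}_{X_2}$, and the contraction of $v$ in $\mathfrak{M}_X^*$ is the dual of the deletion $\mathfrak{M}_{X_1}$, with the multiplicity functions matching on the nose since $m^*(A)=m(X\setminus A)$.

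The one non-bookkeeping ingredient I would record is the symmetry of the arithmetic Tutte polynomial under dualization,
$$
M_{\mathfrak{M}_X^*}(x,y)=M_{\mathfrak{M}_X}(y,x).
$$
I would prove this in one step by substituting $B:=X\setminus A$ in the defining sum for $M_{\mathfrak{M}_X^*}(x,y)$: using $m^*(A)=m(X\setminus A)$, $rk^*(X)=|X|-rk(X)$ and $rk^*(A)=|A|-rk(X)+rk(X\setminus A)$, the exponents $rk^*(X)-rk^*(A)$ and $|A|-rk^*(A)$ collapse respectively to $|B|-rk(B)$ and $rk(X)-rk(B)$, which are exactly the exponents appearing in $M_{\mathfrak{M}_X}(y,x)$. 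This identity is purely formal and does not use the molecule hypothesis.

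Combining the two inputs with the symmetry, I apply the previous lemma to $v$ as a free vector in $\mathfrak{M}_X^*$, rewrite the two resulting terms on the right using (ii) and the symmetry identity, and finally exchange the roles of $x$ and $y$ to arrive at $M_X(x,y)=M_{X_1}(x,y)+(y-1)M_{X_2}(x,y)$. I do not foresee a real obstacle: the symmetry lemma is a one-line substitution and the rest is bookkeeping around the fact that dualization swaps deletion and contraction. As a self-contained sanity check, one can mimic the proof of the previous lemma verbatim in the torsion case: since $v$ torsion gives $rk(A\cup\{v\})=rk(A)$ for every $A\subseteq X$, splitting the defining sum of $M_X(x,y)$ into the terms with $v\notin A$ and those with $v\in A$ produces $M_{X_1}(x,y)$ and $(y-1)M_{X_2}(x,y)$ respectively, where the extra factor $(y-1)$ comes from the shift $|A|-rk(A)=(|A\setminus\{v\}|-rk_2(A\setminus\{v\}))+1$.
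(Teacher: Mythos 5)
Your proposal matches the paper's approach: the paper dismisses this as an "immediate corollary" of the free-vector lemma by duality, and your argument (torsion in $\mathfrak{M}_X$ is free in $\mathfrak{M}_X^*$, deletion and contraction swap under dualization with multiplicities matching, and $M_{\mathfrak{M}_X^*}(x,y)=M_{\mathfrak{M}_X}(y,x)$ by the substitution $B:=X\setminus A$) is exactly the content being invoked, spelled out. Your "self-contained sanity check" via direct splitting of the defining sum, using $rk(A\cup\{v\})=rk(A)$ for $v$ torsion, is also correct and gives an alternative route, but the duality argument is the one the paper intends.
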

We will now prove that the polynomial $\overline{M}_X(x,y)$
satisfies the same recurrences.
\begin{Lem}
If $(\mathfrak{M}_X,m)$ is an arithmetic matroid with no proper
vectors, and $v\in X$ is a free vector, then
$$
\overline{M}_X(x,y)=(x-1)\overline{M}_{X_1}(x,y)+\overline{M}_{X_2}(x,y),
$$
where $\overline{M}_{X_1}(x,y)$ and $\overline{M}_{X_2}(x,y)$
denote the arithmetic Tutte polynomial associated to the deletion
and the contraction arithmetic matroid with respect to $v$,
respectively.
\end{Lem}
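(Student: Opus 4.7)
The plan is to exploit the factored form of $\overline{M}_X(x,y)$ that comes directly from the equidistributed matching $\psi$. In the molecule case every pair in $\mathcal{B}$ has the same first coordinate $B=\{f_1,\dots,f_r\}$ and every pair in $\mathcal{B}^*$ has the same first coordinate $B^c=\{t_1,\dots,t_s\}$. Every $T\in L_X$ has the form $B\cup S$ with $S\subseteq B^c$, every $\widetilde{T}\in L_X^*$ has the form $B^c\cup S'$ with $S'\subseteq B$, and all elements of $T\setminus B$ (respectively $\widetilde{T}\setminus B^c$) are externally active because they are torsion in $\mathfrak{M}_X$ (respectively in $\mathfrak{M}_X^*$). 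Plugging $e(B,T)=|S|$ and $e^*(B^c,\widetilde{T})=|S'|$ into the definition of $\overline{M}_X$ together with $\ell((B,T),(B^c,\widetilde{T}))=h\cdot a(T)\cdot a^*(\widetilde{T})$ and $h=m(X)m(\emptyset)/m(B)$, one obtains the factorization
\[
\overline{M}_X(x,y)=\frac{1}{m(B)}\,F(y)\,G(x),
\]
with $F(y):=\sum_{S\subseteq B^c}\mu(B\cup S)\,y^{|S|}$ and $G(x):=\sum_{S'\subseteq B}\mu^*(B^c\cup S')\,x^{|S'|}$. The same reasoning applied to $X_1$ and $X_2$ (whose unique basis is $B_1=B\setminus\{v\}$ and whose dual basis is again $B^c$) gives $\overline{M}_{X_i}=F_i(y)G_i(x)/m_i(B_1)$ for $i=1,2$, where $F_i(y)=\sum_{S\subseteq B^c}\mu_i(B_1\cup S)\,y^{|S|}$, $G_i(x)=\sum_{S'\subseteq B_1}\mu_i^*(B^c\cup S')\,x^{|S'|}$, $m_1(B_1)=m(B_1)$ and $m_2(B_1)=m(B_1\cup\{v\})=m(B)$. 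The whole proof reduces to comparing $F,G$ with their analogues.

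For the $y$-side, axiom (3) applied to $A=\emptyset$, $B'=B\cup S$, $F=B$, $T=S$ (and likewise with $F=B_1$) gives $m(B\cup S)=m(B)m(S)/m(\emptyset)$ and $m(B_1\cup S)=m(B_1)m(S)/m(\emptyset)$; taking the alternating sum $\sum_{S^*\supseteq S}(-1)^{|S^*|-|S|}(\,\cdot\,)$ of these identities yields $\mu_1(B_1\cup S)=(m(B_1)/m(B))\,\mu(B\cup S)$, and the identity $m_2(B_1\cup S^*)=m(B\cup S^*)$ built into the definition of the contraction immediately gives $\mu_2(B_1\cup S)=\mu(B\cup S)$. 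Hence $F_2(y)=F(y)$ and $F_1(y)=(m(B_1)/m(B))\,F(y)$.

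The $x$-side is the main technical step and the main obstacle. Here I split $G(x)$ according to whether $v\in S'$ or not. Using $m^*(B^c\cup S'')=m(B\setminus S'')$ for $S''\subseteq B$, a short Möbius computation (separating the alternating sum that defines $\mu^*$ according to whether the index $S''$ contains $v$) produces the identities
\[
\mu^*(B^c\cup S')=\mu_2^*(B^c\cup S')-\mu_1^*(B^c\cup S')\quad\text{for }S'\subseteq B_1,
\]
\[
\mu^*(B^c\cup(\{v\}\cup S''))=\mu_1^*(B^c\cup S'')\quad\text{for }S''\subseteq B_1,
\]
where on the right-hand side the dual multiplicities are those of $X_1$ and $X_2$ and the alternating sums range only over $S''\subseteq B_1$. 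Substituting into $G(x)=\sum_{S'\subseteq B_1}\mu^*(B^c\cup S')x^{|S'|}+x\sum_{S''\subseteq B_1}\mu^*(B^c\cup\{v\}\cup S'')x^{|S''|}$ collapses to $G(x)=G_2(x)+(x-1)G_1(x)$.

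Combining the two sides, and using $m_1(B_1)=m(B_1)$ and $m_2(B_1)=m(B)$, one finds
\[
\overline{M}_X(x,y)=\frac{F(y)\bigl[G_2(x)+(x-1)G_1(x)\bigr]}{m(B)}=\frac{F_2(y)G_2(x)}{m(B)}+(x-1)\,\frac{F_1(y)G_1(x)}{m(B_1)}=\overline{M}_{X_2}(x,y)+(x-1)\,\overline{M}_{X_1}(x,y),
\]
which is the claimed recursion. The delicate point is to set up the dual Möbius identities with care: the index set of the alternating sum shrinks from $S''\subseteq B$ in the original to $S''\subseteq B_1$ in $X_1$ and $X_2$, and one must match the extra $v\in S''$ summand in $\mu^*$ with precisely the $X_1$-contribution so that the cancellation produces the factor $(x-1)$ with the right sign.
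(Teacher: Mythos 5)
Your proof is correct, and it takes a genuinely different route from the paper's. You exploit the equidistribution explicitly to rewrite $\overline{M}_X$ in the product form $F(y)\,G(x)/m(B)$, and then compare the $y$-factor and $x$-factor separately with their analogues in the deletion and contraction; the whole recursion collapses to the algebraic identity $G(x)=G_2(x)+(x-1)G_1(x)$ plus $F_2=F$, $F_1=(m(B_1)/m(B))F$, and bookkeeping with $m(B)$ versus $m_1(B_1)=m(B_1)$, $m_2(B_1)=m(B)$. The paper instead keeps the sum over pairs $(B^c,\widetilde{T})\in\mathcal{B}^*$ intact, splits by $v\in\widetilde{T}$ versus $v\notin\widetilde{T}$, and proves the needed Möbius identities $\mu_1^*(\widetilde{T}\setminus\{v\})=\mu^*(\widetilde{T})$ and $\mu^*(\widetilde{S})+\mu^*(\widetilde{S}\cup\{v\})=\mu_2^*(\widetilde{S})$; it then has to argue separately that the restricted/descended matching remains equidistributed and hence agrees with $\psi_1$, $\psi_2$. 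Your Möbius identities are exactly these same facts repackaged, but your factored presentation has a real advantage: it makes manifest that $\overline{M}_X$ depends only on the data encoded in $F$ and $G$, not on the particular equidistributed bijection chosen, so the compatibility of $\psi$ with deletion and contraction never needs to be checked. The cost is that the factorization trick is specific to the molecular case, whereas the paper's sum-splitting template is the one that generalizes to Lemma 7.4 for proper vectors.
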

\begin{proof}
We have
\begin{eqnarray*}
\overline{M}_X(x,y) & = & \sum_{(B^c,\widetilde{T})\in
\mathcal{B}^*}x^{e^*(B^c,\widetilde{T})}y^{e(\psi^{-1}(B^c,\widetilde{T}))}\\
 & = & \mathop{\sum_{(B^c,\widetilde{T})\in
\mathcal{B}^*}}_{v\in
\widetilde{T}}x^{e^*(B^c,\widetilde{T})}y^{e(\psi^{-1}(B^c,\widetilde{T}))}
+\mathop{\sum_{(B^c,\widetilde{T})\in \mathcal{B}^*}}_{v\notin
\widetilde{T}}x^{e^*(B^c,\widetilde{T})}y^{e(\psi^{-1}(B^c,\widetilde{T}))}.
\end{eqnarray*}
Since $v$ is a free vector, it is contained in $B$. So it never
acts on $B$. Dually, $v$ is torsion in the dual and $v\notin B^c$,
hence it is always externally active on $B^c$.

Let us consider the elements of $\mathcal{B}^*$ involved in the
left summand. We clearly have
\begin{eqnarray*}
\mathop{\sum_{(B^c,\widetilde{T})\in \mathcal{B}^*}}_{v\in
\widetilde{T}}x^{e^*(B^c,\widetilde{T})}y^{e(\psi^{-1}(B^c,\widetilde{T}))}
& = & x\cdot \mathop{\sum_{(B^c,\widetilde{T})\in
\mathcal{B}^*}}_{v\in
\widetilde{T}}x^{e^*(B^c,\widetilde{T}\setminus
\{v\})}y^{e(\psi^{-1}(B^c,\widetilde{T}))}.
\end{eqnarray*}
We want to show that
\begin{equation} \label{eq:lemmamol}
\mathop{\sum_{(B^c,\widetilde{T})\in \mathcal{B}^*}}_{v\in
\widetilde{T}}x^{e^*(B^c,\widetilde{T}\setminus
\{v\})}y^{e(\psi^{-1}(B^c,\widetilde{T}))} =
\sum_{(B^c,\widetilde{S})\in
\mathcal{B}_{1}^*}x^{e^*(B^c,\widetilde{S})}y^{e(\psi_{1}^{-1}(B^c,\widetilde{S}))},
\end{equation}
where $\mathcal{B}_{1}^*$ denotes the list of pairs corresponding
to (the dual of) the deletion of $v$, and
$\psi_{1}^{-1}=\psi_{B\setminus \{v\}}^{-1}$ is the bijection
between $\mathcal{B}_{1}^*$ and $\mathcal{B}_{1}$ in the deletion
of $v$. If we can show this, then we have
\begin{eqnarray*}
x\cdot \mathop{\sum_{(B^c,\widetilde{T})\in \mathcal{B}^*}}_{v\in
\widetilde{T}}x^{e^*(B^c,\widetilde{T}\setminus
\{v\})}y^{e(\psi^{-1}(B^c,\widetilde{T}))} & = & x\cdot
\sum_{(B^c,\widetilde{S})\in
\mathcal{B}_{1}^*}x^{e^*(B^c,\widetilde{S})}y^{e(\psi_{1}^{-1}(B^c,\widetilde{S}))}\\
& = & x\overline{M}_{X_1}(x,y).
\end{eqnarray*}

We make two remarks. First of all, for $(B^c,\widetilde{T})\in
\mathcal{B}^*$ with $v\in \widetilde{T}$ we have
\begin{eqnarray*}
\mu_{1}^*(\widetilde{T}\setminus \{v\}) & = & \mu_{X\setminus
\{v\}}^*(\widetilde{T}\setminus
\{v\})=\sum_{\widetilde{T}\setminus \{v\}\subseteq A\subseteq
X\setminus \{v\}}(-1)^{|A|-|\widetilde{T}\setminus \{v\}|}m_{1}^*(A)\\
& = & \sum_{\widetilde{T}\setminus \{v\}\subseteq A\subseteq
X\setminus \{v\}}(-1)^{|A|-|\widetilde{T}\setminus
\{v\}|}m((X\setminus \{v\})\setminus A)\\
 & = & \sum_{\widetilde{T}\setminus \{v\}\subseteq A\subseteq
X\setminus \{v\}}(-1)^{|A|-|\widetilde{T}\setminus \{v\}|}m(X\setminus (A\cup \{v\}))\\
 & = &   \sum_{\widetilde{T}\subseteq A'\subseteq X}(-1)^{|A'|-|\widetilde{T}|}m(X\setminus A')\\
 & = & \sum_{\widetilde{T}\subseteq A'\subseteq X}(-1)^{|A'|-|\widetilde{T}|}m^*(A') = \mu_{X}^*(\widetilde{T})=
\mu^*(\widetilde{T}).
\end{eqnarray*}
Moreover, there exists a positive integer $h$ such that for every
$(B,T)\in \mathcal{B}$ we have
\begin{equation} \label{eq:h}
h\cdot \mu_{1}(T\setminus \{v\})= h\cdot \mu_{X\setminus
\{v\}}(T\setminus  \{v\})= \mu_{X}(T)= \mu(T).
\end{equation}
In fact, by axiom (2) for $m$, $m(X)=h\cdot m(X\setminus \{v\})$
for some positive integer $h$. Then, by axiom (3) for $m$, for all
$T\subseteq A\subseteq X$,
$$
m(X)\cdot m(A\setminus \{v\})=m(X\setminus \{v\})\cdot m(A),
$$
which implies that $m(A)=h\cdot m(A\setminus \{v\})$. This
immediately gives \eqref{eq:h}.

The first remark guarantees that both sides of \eqref{eq:lemmamol}
have the same number of summands. The second one guarantees that
the restriction of $\psi^{-1}$ to the elements in the left hand
side of \eqref{eq:lemmamol} is still equidistributed, and hence
guarantees that it is in fact $\psi_{1}^{-1}$. Then we already
observed that the local activities correspond in the right way.

For the other summand we have
\begin{eqnarray*}
\mathop{\sum_{(B^c,\widetilde{T})\in \mathcal{B}^*}}_{v\notin
\widetilde{T}}x^{e^*(B^c,\widetilde{T})}y^{e(\psi^{-1}(B^c,\widetilde{T}))}
 & = & \sum_{(B^c,\widetilde{T})\in \mathcal{B}^*}x^{e^*(B^c,\widetilde{T}\setminus
\{v\})}
 y^{e(\psi^{-1}(B^c,\widetilde{T}))}\\
  & - &
\mathop{\sum_{(B^c,\widetilde{T})\in \mathcal{B}^*}}_{v\in
\widetilde{T}}x^{e^*(B^c,\widetilde{T}\setminus
\{v\})}y^{e(\psi^{-1}(B^c,\widetilde{T}))}.
\end{eqnarray*}
From what we have just seen, the second summand is clearly
$-\overline{M}_{X_1}(x,y)$. For the first summand, notice that in
the contraction of $v$ we have $m_2(A)=m(A\cup \{v\})$ for every
$A\subseteq X_2=X\setminus \{v\}$. Since every maximal rank
sublist $S$ of $X$ contains $v$, its multiplicity is the same as
the multiplicity of $S\setminus\{v\}$ in the contraction of $v$,
and these are exactly the maximal rank sublists of
$X_2=X\setminus\{v\}$. For $(B^c,\widetilde{S})\in
\mathcal{B}_{2}^*$ (remember $\widetilde{S}\subseteq
X_2:=X\setminus \{v\}$) we have
\begin{eqnarray*}
\mu^*(\widetilde{S})+\mu^*(\widetilde{S}\cup\{v\}) & = &
\sum_{\widetilde{S}\subseteq A\subseteq
X}(-1)^{|A|-|\widetilde{S}|}m^*(A)+ \sum_{\widetilde{S}\cup\{v\}
\subseteq A\subseteq X}(-1)^{|A|-|\widetilde{S}\cup\{v\}|}m^*(A)\\
 & = & \sum_{\widetilde{S}\subseteq A\subseteq
X}(-1)^{|A|-|\widetilde{S}|}m^*(A)-
\mathop{\sum_{\widetilde{S}\subseteq A\subseteq X}}_{v\in
A}(-1)^{|A|-|\widetilde{S}|}m^*(A)\\
 & = & \mathop{\sum_{\widetilde{S}\subseteq A\subseteq X}}_{v\notin
A}(-1)^{|A|-|\widetilde{S}|}m^*(A) = \sum_{\widetilde{S}\subseteq A\subseteq X_2}(-1)^{|A|-|\widetilde{S}|}m(X\setminus A)\\
 & = &\sum_{\widetilde{S}\subseteq A\subseteq X_2}(-1)^{|A|-|\widetilde{S}|}m((X_2\setminus A)\cup
 \{v\})\\
 & = & \sum_{\widetilde{S}\subseteq A\subseteq X_2}(-1)^{|A|-|\widetilde{S}|}m_2(X_2\setminus A)\\
 & = &\sum_{\widetilde{S}\subseteq A\subseteq X_2}(-1)^{|A|-|\widetilde{S}|}m_{2}^*(A)=\mu_{2}^*(\widetilde{S}).
\end{eqnarray*}

Notice also that we are taking the sum over all the pairs, but we
are computing the external activity $e^*$ by removing $v$ from the
elements of $L_{X}^*$. So we get the same result as if we did it
in the contraction of $v$. Finally, the bijection $\psi$ is
clearly equidistributed, hence it corresponds to the bijection
$\psi_2$ of the contraction. Therefore
$$
\sum_{(B^c,\widetilde{T})\in
\mathcal{B}^*}x^{e^*(B^c,\widetilde{T}\setminus \{v\})}
 y^{e(\psi^{-1}(B^c,\widetilde{T}))}=\sum_{(B^c,\widetilde{S})\in
\mathcal{B}_{2}^*}x^{e^*(B^c,\widetilde{S})}
 y^{e(\psi_{2}^{-1}(B^c,\widetilde{S}))}=\overline{M}_{X_2}(x,y).
$$
This concludes the proof of the lemma.
\end{proof}
Again, dually we have the following immediate corollary.
\begin{Lem}
If $(\mathfrak{M}_X,m)$ is an arithmetic matroid with no proper
vectors, and $v\in X$ is a torsion vector, then
$$
\overline{M}_X(x,y)=\overline{M}_{X_1}(x,y)+(y-1)\overline{M}_{X_2}(x,y),
$$
where $\overline{M}_{X_1}(x,y)$ and $\overline{M}_{X_2}(x,y)$
denote the arithmetic Tutte polynomial associated to the deletion
and the contraction arithmetic matroid with respect to $v$,
respectively.
\end{Lem}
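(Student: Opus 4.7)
The plan is to obtain this result as a direct consequence of the preceding lemma (the free-vector case) via the duality of arithmetic matroids, exactly as the author's phrase ``dually we have the following immediate corollary'' suggests. By Remark \ref{FTP}, a vector $v\in X$ is torsion in $(\mathfrak{M}_X,m)$ if and only if it is free in the dual $(\mathfrak{M}_X^*,m^*)$, and from the end of Section 3.3 the deletion of $v$ in $\mathfrak{M}_X$ corresponds to the contraction of $v$ in $\mathfrak{M}_X^*$ and vice versa. Moreover, since the dual of a molecule is again a molecule, the dual arithmetic matroid also has no proper vectors, so the previous lemma applies to it with $v$ playing the role of the free vector.

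The key auxiliary step is the following ``self-duality'' of the polynomial $\overline{M}_X(x,y)$: one has
$$\overline{M}_{X^*}(x,y)=\overline{M}_X(y,x).$$
This should come straight from inspecting the definition. Indeed, passing from $X$ to $X^*$ swaps the roles of $\mathcal{B}$ and $\mathcal{B}^*$, swaps the two activities $e$ and $e^*$, and the matching $\psi_B$ constructed in Section~6 is symmetric in the two sides (it is defined by an equidistribution condition that is manifestly invariant under the involution $(B,T)\leftrightarrow(B^c,\widetilde T)$, so the matching for the dual molecule is just $\psi^{-1}$). With these three swaps simultaneously in effect, the defining sum
$\sum x^{e^*(B^c,\widetilde T)}y^{e(\psi^{-1}(B^c,\widetilde T))}$
literally becomes the corresponding sum for $X^*$ with $x$ and $y$ interchanged.

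Granting this symmetry, the proof is a one-line substitution. Apply the free-vector lemma to $(\mathfrak{M}_X^*,m^*)$ at the (now free) vector $v$, obtaining
$$\overline{M}_{X^*}(x,y)=(x-1)\,\overline{M}_{(X^*)_1}(x,y)+\overline{M}_{(X^*)_2}(x,y).$$
Using $(\mathfrak{M}_X^*)_1=(\mathfrak{M}_{X_2})^*$ and $(\mathfrak{M}_X^*)_2=(\mathfrak{M}_{X_1})^*$, together with the self-duality on both sides, this reads
$$\overline{M}_X(y,x)=(x-1)\,\overline{M}_{X_2}(y,x)+\overline{M}_{X_1}(y,x).$$
Finally, swapping the variables $x$ and $y$ yields exactly
$\overline{M}_X(x,y)=\overline{M}_{X_1}(x,y)+(y-1)\overline{M}_{X_2}(x,y)$, as required.

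The only genuine work, and thus the main (minor) obstacle, is the verification of the self-duality identity $\overline{M}_{X^*}(x,y)=\overline{M}_X(y,x)$. Concretely, one must check that the equidistributed matching $\psi$ constructed in the molecular case for $(\mathfrak{M}_X,m)$ is precisely the inverse of the corresponding matching for $(\mathfrak{M}_X^*,m^*)$. Since the matching was defined explicitly in terms of the quantities $h$, $a(T)$, $a^*(\widetilde T)$, and since these quantities swap under duality (the roles of $m(X)$ and $m(\emptyset)$, and of $c(B)$ and $c^*(B)$, are interchanged), this reduces to a direct bookkeeping check, after which the whole argument is purely formal.
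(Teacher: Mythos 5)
Your proof is correct and takes exactly the same route as the paper, which dispatches this lemma with the single phrase ``Again, dually we have the following immediate corollary''; you have merely unpacked what that duality argument amounts to. The only nontrivial ingredient you correctly isolate is the self-duality $\overline{M}_{X^*}(x,y)=\overline{M}_X(y,x)$, which holds because $h=m(X)m(\emptyset)/m(B)$ is symmetric under swapping $X\leftrightarrow\emptyset$ and $c(B)\leftrightarrow c^*(B)$, so that $\ell_{\mathrm{dual}}\bigl((B^c,\widetilde T),(B,T)\bigr)=h\,a^*(\widetilde T)\,a(T)=\ell\bigl((B,T),(B^c,\widetilde T)\bigr)$, i.e.\ the equidistributed matching for $(\mathfrak{M}_X^*,m^*)$ is the inverse of the one for $(\mathfrak{M}_X,m)$.
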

\begin{Rem}
Notice that in order to prove these lemmas we didn't use any
specific order on the list of vectors $X$: this is because the
only elements that could be acting on the basis were torsion, and
hence they were acting anyway, no matter the order on $X$. In the
general case, to deal with the proper vectors an order will be
useful.
\end{Rem}
We can now prove Theorem \ref{thm:specialcase}.
\begin{proof}[proof of Theorem \ref{thm:specialcase}]
Applying the previous Lemmas and the observation that free and
torsion elements remains free and torsion respectively under
deletion and contraction, we reduce the problem to check the
equality $M_X(x,y)=\overline{M}_X(x,y)$ in the case of an empty
list. But in this case both polynomials are obviously equal to
$m(\emptyset)$.
\end{proof}

\section{The general case}

For the general case, we want to define for each basis $B$ in $X$
a matching $\psi_B$ of the pairs in $\mathcal{B}$ of the form
$(B,T)$ with the pairs in $\mathcal{B}^*$ of the form
$(B^c,\widetilde{T})$, and then ``join'' them together in a
matching $\psi$ from $\mathcal{B}$ to $\mathcal{B}^*$.

First of all we fix a total order on the elements of $X$.

For each basis $B$ in $X$, denote by $\mathcal{B}_{B}$ the sublist
of pairs of $\mathcal{B}$ whose first coordinate is $B$. For each
pair $(B,T)$ in this list, we ignore the elements of $T$ that are
not externally active on $B$, identifying the pairs that differ
only for such nonactive elements. We do the same for $B^c$ in
$\mathcal{B}_{B^c}^*$. We claim that we can match evenly these
pairs and we call $\psi_B$ such an equidistributed matching. Then
$\psi$ will be just the ``join'' of these matchings.
\begin{Rem}
Notice that in general the matching $\psi$ depends on the order
that we choose.
\end{Rem}

Before showing that such a matching $\psi$ exists, we show
how we will use it.

We define
$$
\overline{M}_X(x,y):=\sum_{(B,T)\in
\mathcal{B}}x^{e^*(\psi(B,T))}y^{e(B,T)},
$$
where $\psi$ is the matching that we just described.

The following theorem is the main result of this paper.

\begin{Thm} \label{thm:generalcase}
If $(\mathfrak{M}_X,m)$ is an arithmetic matroid, then
$$
M_X(x,y)=\overline{M}_X(x,y)=\sum_{(B,T)\in
\mathcal{B}}x^{e^*(\psi(B,T))}y^{e(B,T)},
$$
where $\psi$ is the bijection between $\mathcal{B}$ and
$\mathcal{B}^*$ described above.
\end{Thm}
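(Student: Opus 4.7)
The plan is to argue by induction on the number of proper vectors in $X$, with the base case (no proper vectors) supplied by Theorem \ref{thm:specialcase}. For the inductive step, pick a proper vector $v \in X$ and first verify the classical recursion
$$M_X(x,y) = M_{X_1}(x,y) + M_{X_2}(x,y),$$
where $X_1$ and $X_2$ are the deletion and the contraction by $v$. This comes from splitting the sum in formula \eqref{eq:aritutte} according to whether $v \in A$: when $v$ is proper one has $rk_1(X_1) = rk(X)$ and $rk_2(X_2) = rk(X)-1$, and together with $m_1(A) = m(A)$ for $A \subseteq X_1$ and $m_2(A') = m(A' \cup \{v\})$ for $A' \subseteq X_2$, the exponents reassemble into $M_{X_1}$ and $M_{X_2}$ exactly.

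Second, choose $v$ to be the maximum element in the fixed total order on $X$ and prove the parallel recursion
$$\overline{M}_X(x,y) = \overline{M}_{X_1}(x,y) + \overline{M}_{X_2}(x,y).$$
The pairs $(B, T) \in \mathcal{B}$ split according to whether $v \in B$: those with $v \notin B$ correspond to pairs of $\mathcal{B}_1$ after identifying $T$ with $T \triangle \{v\}$, while those with $v \in B$ correspond to pairs of $\mathcal{B}_2$ via $(B, T) \mapsto (B \setminus \{v\}, T \setminus \{v\})$. Placing $v$ last is the key observation: a proper vector at the end of the order is never externally active on a basis not containing it, nor dually on $B^c$ when it does lie in $B$, so $e(B,T)$ and $e^*(B^c,\tilde T)$ in $X$ coincide with their counterparts in $X_1$ or $X_2$. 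The multiplicities match via $\mu_1(T') = \mu(T') + \mu(T' \cup \{v\})$ and $\mu_2(T'') = \mu(T'' \cup \{v\})$ (with symmetric identities for $\mu^*$), which are direct consequences of Möbius inversion applied to the definitions of $m_1$ and $m_2$, and ultimately rest on axioms (1)--(3) of the multiplicity function.

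Third, the equidistributed matching $\psi_B$ must actually exist for each basis $B$ of $X$: to produce it, contract the elements of $B$ and delete from $X \setminus B$ those elements that are neither externally active on $B$ nor dually active on $B^c$; what remains is a rank-$0$ arithmetic matroid, hence a molecule, and the molecular argument of Section 6 supplies an equidistributed matching on its pairs which pulls back to $\psi_B$. The ``join'' of these matchings across all bases is the required $\psi$. The main obstacle is precisely the compatibility of $\psi$ with the deletion-contraction split: one must verify that after splitting $\mathcal{B}$ and $\mathcal{B}^*$ by the position of $v$, the restricted bijection remains equidistributed on each side and therefore computes $\overline{M}_{X_1}$ or $\overline{M}_{X_2}$ (this is independent of the particular choice of equidistributed matching, since distinct such matchings yield the same polynomial). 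Once this compatibility is checked using the $\mu$-identities above, the induction closes and the theorem reduces to Theorem \ref{thm:specialcase}.
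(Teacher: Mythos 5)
Your high-level architecture is the same as the paper's: verify the additive deletion-contraction recursion for both $M_X$ and $\overline{M}_X$ at a proper vector, and run an induction down to the molecular base case handled by Theorem \ref{thm:specialcase}. The $\mu$-identities you state, $\mu_1(T') = \mu(T')+\mu(T'\cup\{v\})$ and $\mu_2(T'')=\mu(T''\cup\{v\})$, are correct and are exactly what the paper uses.

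There is, however, a genuine gap in your construction of $\psi_B$. You propose to ``contract the elements of $B$ and delete from $X\setminus B$ those elements that are neither externally active on $B$ nor dually active on $B^c$,'' leaving a rank-$0$ molecule. First, a wording problem: an element $w\in X\setminus B$ lies in $B^c$, so it can never be \emph{externally} active on $B^c$ in the dual --- that clause is vacuous, and what you describe is simply contracting all of $B$ and deleting the non-active part of $X\setminus B$. Second, and more seriously, contracting \emph{all} of $B$ destroys exactly the information needed to define $\psi_B$. The matching must pair classes indexed by $T\cap E(B)$ with classes indexed by $\widetilde{T}\cap E^*(B^c)$, where $E^*(B^c)\subseteq B$ is the set of dually (externally) active elements. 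The free vectors of $B$ are torsion in the dual, hence always dually active, so they belong to $E^*(B^c)$; more generally the dually active proper elements of $B$ also belong to $E^*(B^c)$. Contracting all of $B$ kills every element of $E^*(B^c)$: the reduced rank-$0$ molecule has a trivial $L^*$ (only the full ambient set appears), so every class on the $\mathcal{B}^*$ side collapses and there is nothing left to ``pull back'' into a bijection with the $\mathcal{B}^*_{B^c}$ classes of the original matroid. The paper's Lemma \ref{lem:equi} avoids this by removing only the \emph{proper} vectors, one at a time and in decreasing order, contracting $v$ if $v\in B$ and deleting $v$ if $v\notin B$; at each step the removed vector is simultaneously inactive on $B$ and dually inactive on $B^c$, so both sides of the eventual molecule retain exactly the active data $E(B)$ and $E^*(B^c)$.

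Two smaller points. You should take $v$ to be the greatest \emph{proper} vector (there may be free or torsion elements larger in the fixed order), and then argue separately --- as the paper implicitly does --- that a proper vector cannot be dependent on a set of later coloops, so $v\notin B$ is never externally active on $B$ and, dually, $v\in B$ is never dually active on $B^c$. Also, the compatibility of the matchings $\psi_B$ with the deletion-contraction split, which you flag as ``the main obstacle,'' is the substance of the paper's proof of the $\overline{M}$ recursion and does need to be carried out rather than merely asserted; with your incorrect construction of $\psi_B$ it cannot be checked.
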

\begin{Rem}
Notice that, even if in general the matching $\psi$ depends on the
order that we choose, the polynomial
$\overline{M}_X(x,y)=M_X(x,y)$ will not depend on it.

Also, observe that in the special case when all the multiplicities
are equal to $1$ the arithmetic Tutte polynomial reduces just to
the classical Tutte polynomial of the matroid. Moreover in this
case $L_X=(X)=L_{X^*}$, and our combinatorial description
corresponds exactly to the one given by Crapo in Theorem
\ref{Cra}. In this sense our result can be seen as a
generalization of Crapo's Theorem.
\end{Rem}
Let us first look at an example.
\begin{Ex}
Let
$X=\{a:=(3,3,0),b:=(-6,-6,-6),c:=(0,0,3),d:=(0,0,12)\}\subseteq
\mathbb{Z}^3$. Here we take as the group $G$ the maximal subgroup
of $\mathbb{Z}^3$ in which $\langle X\rangle$ has finite index.
The rank of the arithmetic matroid $\mathfrak{M}_X$ associated to
the pair $(G,X)$ (i.e. of $\langle X\rangle$) is $2$, and the
bases are $\{a,b\}$, $\{a,c\}$, $\{a,d\}$, $\{b,c\}$ and
$\{b,d\}$.

The multiplicities are $m(\emptyset)=1$,
$m(\{a\})=m(\{c\})=m(\{c,d\})=3$, $m(\{b\})=6$,
$m(\{a,c\})=m(\{a,b,c\})=m(\{a,c,d\})=m(X)=9$, $m(\{d\})=12$,
$m(\{a,b\})=m(\{b,c\})=m(\{a,b,d\})=m(\{b,c,d\})=18$,
$m(\{a,d\})=36$, $m(\{b,d\})=72$.

We have
$$
L_{X}=(X^9,\{b,c,d\}^9,\{a,b,d\}^9,\{a,d\}^{18},\{b,d\}^{45})
$$
and
\begin{eqnarray*}
L_{X}^* & = &
(X,\{b,c,d\}^{2},\{a,c,d\}^5,\{a,b,d\}^2,\{a,b,c\}^{11},
\{a,c\}^{55},\{a,d\}^{10}, \{b,c\}^{22},\\
 & & \{b,d\}^{4},\{c,d\}^{10}).
\end{eqnarray*}
We set the order $a<b<c<d$, so in $\mathcal{B}$ we have
$e(\{a,d\},X)=e(\{b,c\},X)=e(\{b,d\},\{b,c,d\})=e(\{b,d\},\{a,b,d\})=1$,
$e(\{b,d\},X)=2$ and all the others are $0$, while in
$\mathcal{B}^*$ we have
$e^*(\{b,c\},X)=e^*(\{b,c\},\{a,b,c\})=e^*(\{b,d\},\{a,b,d\})=
e^*(\{b,d\},X)=e^*(\{c,d\},\{a,c,d\})=e^*(\{c,d\},\{b,c,d\})=1$,
$e^*(\{c,d\},X)=2$ and all the others are $0$.

For example, for the basis $\{a,d\}$ we distinguish $X^9$, which
contain the active vector $c$, from $\{a,b,d\}^9$ and
$\{a,d\}^{18}$, which don't contain any active vector. Hence we
have $9$ pairs which give a $y$ and $27$ pairs which give a $1$ to
match. In the dual, for the basis $\{b,c\}$, we distinguish $X$
and $\{a,b,c\}^{11}$, which contain the active vector $a$, from
$\{b,c,d\}^2$ and $\{b,c,d\}^{22}$. Hence we have $12$ pairs which
give a $x$ and $24$ pairs which give a $1$ to match. Therefore we
have a summand $3xy+6y+9x+18=3(x+2)(y+3)$.

Doing the same with all the other bases we get
$$
\overline{M}_X(x,y)=x^2+19x+88+3xy+33y+9y^2=M_X(x,y),
$$
as it should be.
\end{Ex}
Let us now prove the existence of the matching.
\begin{Lem} \label{lem:equi}
The equidistributed matchings $\psi_B$ defined before exist.
\end{Lem}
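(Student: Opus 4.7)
The plan is to reduce the existence of each equidistributed matching $\psi_B$ to the molecular case already handled in Section~6. Fix a basis $B$ and use the total order on $X$; let $E(B) \subseteq X \setminus B$ be the externally active elements on $B$, and $I(B) \subseteq B$ the internally active elements (those externally active on $B^c$ in the dual). After identifying pairs $(B,T_1) \sim (B,T_2)$ whenever $T_1 \cap E(B) = T_2 \cap E(B)$, the equivalence classes of $\mathcal{B}_B$ are indexed by subsets $A \subseteq E(B)$, with aggregate weight
\[
\alpha(A) := \sum_{\substack{T \in L_X,\; T \supseteq B\\ T \cap E(B) = A}} \mu(T),
\]
and dually one obtains classes indexed by $A^* \subseteq I(B)$ with weights $\alpha^*(A^*)$. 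Both totals equal $m(B) = m^*(B^c)$.

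The equidistribution condition, combined with the marginal constraint $\sum_{A^*} \ell(A, A^*) = \alpha(A)$, forces $\ell(A, A^*) := \alpha(A) \cdot \alpha^*(A^*) / m(B)$, so existence of $\psi_B$ reduces to showing that $m(B)$ divides $\alpha(A)\,\alpha^*(A^*)$ for every pair $(A, A^*)$. Granting this, the marginal sums and ratio conditions are automatic: $\sum_{A^*}\ell(A,A^*) = \alpha(A) \cdot m^*(B^c)/m(B) = \alpha(A)$, and symmetrically in $A$.

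To obtain the divisibility I would build the \emph{local molecule at $B$}: the arithmetic matroid $\mathfrak{N}$ obtained from $\mathfrak{M}_X$ by deleting every element of $X \setminus (B \cup E(B))$ (the inactive part of $X \setminus B$) and contracting every element of $B \setminus I(B)$. By construction the underlying matroid of $\mathfrak{N}$ has ground set $I(B) \sqcup E(B)$ with unique basis $I(B)$ and all of $E(B)$ torsion, hence is a molecule. The central technical claim is that the associated weights coincide with the local ones: $\mu_{\mathfrak{N}}(A) = \alpha(A)$ and $\mu^*_{\mathfrak{N}}(A^*) = \alpha^*(A^*)$. Once established, the molecular result $m_{\mathfrak{N}}(I(B)) \mid \mu_{\mathfrak{N}}(X_\mathfrak{N}) \cdot \mu^*_{\mathfrak{N}}(X_\mathfrak{N})$, together with the identity $m_{\mathfrak{N}}(I(B)) = m(B)$ (itself a consequence of axiom~(3) applied to the split $B = I(B) \cup (B \setminus I(B))$ inside $X$), transports back to the desired divisibility $m(B) \mid \alpha(A)\,\alpha^*(A^*)$.

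The main obstacle is precisely this identification $\mu_\mathfrak{N}(A) = \alpha(A)$, because a single class $A \subseteq E(B)$ aggregates the contributions of many $T \in L_X$ that differ in the inactive part $T \cap (X \setminus (B \cup E(B)))$. The plan is to apply axiom~(3) to the decomposition $T = B \cup (T \cap E(B)) \cup (T \cap N)$ with $N := X \setminus (B \cup E(B))$; here the hypothesis of axiom~(3) holds because, by definition of non-activity, adding elements of $N$ to $B$ does not increase rank, so $N$ plays the role of the ``$T$''-part while the active $A$ plays the role of the ``$F$''-part. This should factor each $\mu(T)$ into a product of a term depending only on $A$ and a term depending only on $T \cap N$, and the resulting telescoping/inclusion-exclusion over $T \cap N$ collapses exactly to $\mu_\mathfrak{N}(A)$. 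The dual computation for $\alpha^*(A^*)$ is analogous by the self-duality of axiom~(3) established in Section~1. With this reduction complete, the explicit formula $\ell(A, A^*) = h \cdot a(A) \cdot a^*(A^*)$ from the molecular case (Section~6) furnishes $\psi_B$, and the equidistribution properties hold verbatim as there.
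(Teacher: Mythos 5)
Your high-level strategy is the same as the paper's — reduce to the molecular case by passing to a ``basis minor'' $\mathfrak{N}$ obtained from $\mathfrak{M}_X$ by removing the inactive part — and several of the intermediate claims you want are correct, but two steps have real gaps.

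First, the claim that $\mathfrak{N}$ ``by construction'' has unique basis $I(B)$ with all of $E(B)$ torsion is not immediate: one must show that every $e\in E(B)$ is dependent on $B\setminus I(B)$ (not merely on $B_{>e}$, which is the set external activity gives you). This is a genuine, classical fact, but it requires the circuit--cocircuit incidence relation: since $e=\min C(e,B)$, every $b\in C(e,B)\setminus\{e\}$ satisfies $e\in C^*(b,B)$ and $e<b$, hence $b\neq\min C^*(b,B)$, i.e.\ $b$ is internally passive. Without this argument your ``molecule'' claim is unjustified. The paper avoids the issue entirely by removing proper vectors one at a time (always the greatest), so that at each step the removed element is visibly inactive on both sides and Remark~\ref{FTP} carries the structure forward; it never needs the one-shot statement.

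Second, and more seriously, the proposed application of axiom~(3) to get $\mu_{\mathfrak{N}}(I(B)\cup A)=\alpha(A)$ cannot work as stated. In axiom~(3) the $F$-part is the rank-\emph{increasing} direction: the hypothesis is $rk(C)=rk(\text{base})+|C\cap F|$. With the base taken to be the matroid basis $B$, \emph{every} superset of $B$ inside $X$ has the same rank, so for the hypothesis to hold one is forced to take $F=\emptyset$; assigning $F:=A$ (the externally active part) violates the hypothesis unless $A=\emptyset$. So axiom~(3) gives only a tautology here. The identity you want is nevertheless true, and is proved by a direct inclusion--exclusion: writing $T=B\cup A\cup D$ with $D\subseteq N$, one gets
\[
\alpha(A)=\sum_{D\subseteq N}\mu(B\cup A\cup D)
=\sum_{A\subseteq A'\subseteq E(B)}(-1)^{|A'|-|A|}\, m(B\cup A'),
\]
because the inner sum $\sum_{D\subseteq D'}(-1)^{|D'|-|D|}$ kills all terms with $D'\neq\emptyset$. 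Since $m_{\mathfrak{N}}(I(B)\cup A')=m(B\cup A')$ by the definition of deletion and contraction, this is exactly $\mu_{\mathfrak{N}}(I(B)\cup A)$. In short: the factoring you need is not a consequence of axiom~(3) with your $F/T$ assignment, but a straightforward M\"obius-type collapse. The rest of your argument (marginals forcing $\ell(A,A^*)=\alpha(A)\alpha^*(A^*)/m(B)$, the molecular divisibility, and the transport via $m_{\mathfrak{N}}(I(B))=m(B)$) is consistent with what the paper does, once these two holes are patched. The paper's incremental proof by deleting or contracting the greatest proper vector is lighter precisely because each step preserves activities and relates $\mu$-values by a one-line identity, sidestepping both of the claims you leave unproved.
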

\begin{proof}
To show that such matchings $\psi_B$ exist we apply the following
algorithm. Recall that we fixed an order on $X$.

If $X$ has no proper vectors, we use the matching $\psi_B$ that we
constructed in the molecular case.

If not, we look at the greatest proper vector $v$ in $X$. There
are two cases: $v\in B$ or $v\notin B$.

In the first case we do a contraction of $v$, carrying the order
on $X$ over on $X_{2}=X\setminus \{v\}$. Notice that in this case
$B\setminus \{v\}$ is still a basis, and what was active on $B$ is
now active on $B\setminus \{v\}$, while what was not active on $B$
is not active on $B\setminus \{v\}$. Moreover, since
$m_2(A)=m(A\cup\{v\})$ for $A\subseteq X_2$, for $S\subseteq X_2$
such that $S\supseteq B\setminus \{v\}$ we have
$\mu_2(S)=\mu(S\cup\{v\})$. So the list
$(\mathcal{B}_2)_{B\setminus \{v\}}$, where $\mathcal{B}_2$ is as
usual the list of the contraction, may be derived from
$\mathcal{B}_B$ by removing $v$ from both elements of each pair.

In the second case, notice that $v$ is not active on $B$. This
time we make a deletion of $v$. Notice that $B$ is still a basis
in the deletion matroid. Moreover what was active on $B$ it
remains active, and what was not active on $B$ it remains
nonactive. Also, if $S\subseteq X_1:=X\setminus\{v\}$ is such that
$S\supseteq B$, then
\begin{eqnarray*}
\mu(S)+\mu(S\cup\{v\}) & = & \sum_{T\supseteq
S}(-1)^{|T|-|S|}m(T)+ \sum_{T\supseteq
S\cup\{v\}}(-1)^{|T|-|S\cup\{v\}|}m(T)\\
 & = & \sum_{T\supseteq
S}(-1)^{|T|-|S|}m(T)- \mathop{\sum_{T\supseteq S}}_{v\in
T}(-1)^{|T|-|S|}m(T)\\
 & = & \mathop{\sum_{T\supseteq S}}_{v\notin
T}(-1)^{|T|-|S|}m(T)=\mu_1(S).
\end{eqnarray*}
So $(\mathcal{B}_1)_{B\setminus \{v\}}$, where $\mathcal{B}_1$ is
the list corresponding to the deletion, can be obtained again from
$\mathcal{B}_B$ by removing $v$ from both elements (in fact in the
first does not appear) of its pairs.

Notice that these cases are dual of each other, meaning that a
proper vector in $B$ corresponds to a proper vector not in $B^c$
in the dual, and viceversa.

So in both cases what we are really doing is to ignore the element
$v$, either if it is in the basis (and hence dually nonactive) or
it is nonactive (and hence dually in the basis).

We iterate this procedure until we get a molecule. But in this
case we can implement the equidistributed bijection that we
constructed in the previous section: this is going to be our
$\psi_B$.
\end{proof}

We now want to prove Theorem \ref{thm:generalcase}. We will use
the appropriate deletion-contraction recursion for the proper
vectors.

\begin{Lem}
If $(\mathfrak{M}_X,m)$ is an arithmetic matroid with, and $v\in
X$ is a proper vector, then
$$
M_X(x,y)=M_{X_1}(x,y)+M_{X_2}(x,y),
$$
where $M_{X_1}(x,y)$ and $M_{X_2}(x,y)$ denote the arithmetic
Tutte polynomial associated to the deletion and the contraction
arithmetic matroid with respect to $v$, respectively.
\end{Lem}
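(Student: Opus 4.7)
The plan is to follow the pattern already used in the previous two deletion-contraction lemmas (for free and torsion vectors), splitting the defining sum of $M_X(x,y)$ according to whether the proper vector $v$ belongs to $A$ or not, and then matching the two pieces with $M_{X_1}$ and $M_{X_2}$ respectively. The whole argument is a direct calculation; the only thing specific to the proper case is that the resulting coefficients turn out to be $1$ instead of $(x-1)$ or $(y-1)$, precisely because the rank identities for a proper vector are ``balanced'' in the right way.

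First I record the rank identities I need. Since $v$ is proper, $rk_1(X_1)=rk(X)$ and $rk_2(X_2)=rk(X)-1$; in particular $rk(\{v\})=1$. For $A\subseteq X_1$ with $v\notin A$ we have $m_1(A)=m(A)$ and $rk_1(A)=rk(A)$. For $A'\subseteq X_2$ we have $m_2(A')=m(A'\cup\{v\})$ and $rk_2(A')=rk(A'\cup\{v\})-1$.

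The second step is the substitution. Splitting the sum that defines $M_X(x,y)$ into $A$ with $v\notin A$ and $A=A'\cup\{v\}$ with $v\in A$, the first part is
\[
\sum_{v\notin A\subseteq X}m(A)(x-1)^{rk(X)-rk(A)}(y-1)^{|A|-rk(A)}
=\sum_{A\subseteq X_1}m_1(A)(x-1)^{rk_1(X_1)-rk_1(A)}(y-1)^{|A|-rk_1(A)},
\]
which is exactly $M_{X_1}(x,y)$. For the second part, using $rk(X)=rk_2(X_2)+1$ and $rk(A'\cup\{v\})=rk_2(A')+1$, the exponent of $(x-1)$ becomes $rk_2(X_2)-rk_2(A')$, and the exponent of $(y-1)$ becomes $(|A'|+1)-(rk_2(A')+1)=|A'|-rk_2(A')$; together with $m(A'\cup\{v\})=m_2(A')$, this identifies the second part as $M_{X_2}(x,y)$.

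Adding the two yields $M_X(x,y)=M_{X_1}(x,y)+M_{X_2}(x,y)$, as desired. There is no real obstacle: the only subtlety is bookkeeping the way each of the three possible rank types (free, torsion, proper) balances the two factors $(x-1)$ and $(y-1)$, and for a proper vector the $+1$'s cancel symmetrically so that no extra factor appears in front of either summand.
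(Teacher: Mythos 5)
Your proof is correct and follows essentially the same route as the paper's: split the defining sum by whether $v\in A$, then use the proper-vector rank identities ($rk_1(X_1)=rk(X)$ and $rk_2(X_2)=rk(X)-1$, hence $rk(\{v\})=1$) to identify the two halves with $M_{X_1}$ and $M_{X_2}$ with unit coefficients. The only difference is that you spell out the rank bookkeeping more explicitly than the paper's terse chain of equalities, but the argument is the same.
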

This lemma has been proved in \cite{MoT}. We repeat here the proof
for completeness.
\begin{proof}
We have
\begin{eqnarray*}
M_X(x,y) & = & \sum_{A\subseteq
X}m(A)(x-1)^{rk(X)-rk(A)}(y-1)^{|A|-rk(A)}\\
 & = & \sum_{v\notin A\subseteq
X}m(A)(x-1)^{rk(X)-rk(A)}(y-1)^{|A|-rk(A)}\\
 & + & \sum_{v\in A\subseteq
X}m(A)(x-1)^{rk(X)-rk(A)}(y-1)^{|A|-rk(A)}\\
 & = & \sum_{v\notin A\subseteq
X}m_1(A)(x-1)^{rk_1(X\setminus \{v\})-rk_1(A)}(y-1)^{|A|-rk_1(A)}\\
 & + & \sum_{v\in
A\subseteq X}m_2(A\setminus \{v\})(x-1)^{rk_2(X\setminus
\{v\})-rk_2(A\setminus
\{v\})}(y-1)^{|A\setminus \{v\}|-rk_2(A\setminus \{v\})}\\
 & = & M_{X_1}(x,y)+M_{X_2}(x,y).
\end{eqnarray*}
\end{proof}
\begin{Ex}
Let $X=\{a:=(2,-1),b:=(-1,2),c:=(1,1)\}\subseteq \mathbb{Z}^2$. If
we compute the arithmetic Tutte polynomials associated to the
deletion and contraction of the proper vector $c$ we get
$$
M_{X_1}(x,y)=x^2+2\text{ and } M_{X_2}(x,y)=x+2+3y,
$$
so that
$$
M_{X_1}(x,y)+ M_{X_2}(x,y)=x^2+x+4+3y =M_X(x,y),
$$
as it should be.
\end{Ex}
We will now prove that the polynomial $\overline{M}_X(x,y)$
satisfies the same recurrence.
\begin{Lem}
Let $(\mathfrak{M}_X,m)$ be an arithmetic matroid, and let us fix
an order on the vectors $X$. If $v\in X$ is the greatest proper
vector, then
$$
\overline{M}_X(x,y)=\overline{M}_{X_1}(x,y)+\overline{M}_{X_2}(x,y),
$$
where $\overline{M}_{X_1}(x,y)$ and $\overline{M}_{X_2}(x,y)$
denote the arithmetic Tutte polynomial associated to the deletion
and the contraction arithmetic matroid with respect to $v$,
respectively.
\end{Lem}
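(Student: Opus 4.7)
The plan is to mirror the deletion–contraction argument used in the molecular case (Section 6), exploiting the recursive construction of $\psi_B$ given in Lemma \ref{lem:equi}. Since the previous lemma already established $M_X(x,y)=M_{X_1}(x,y)+M_{X_2}(x,y)$, it suffices to prove the identical recursion for $\overline{M}_X$. First I would split the defining sum according to whether $v\in B$:
\[
\overline{M}_X(x,y)=\sum_{\substack{(B,T)\in\mathcal{B}\\ v\in B}} x^{e^*(\psi(B,T))}y^{e(B,T)}\;+\;\sum_{\substack{(B,T)\in\mathcal{B}\\ v\notin B}} x^{e^*(\psi(B,T))}y^{e(B,T)},
\]
and show that the first summand equals $\overline{M}_{X_2}(x,y)$ and the second equals $\overline{M}_{X_1}(x,y)$. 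This case split is precisely the one driving the algorithm of Lemma \ref{lem:equi}: when $v\in B$ one contracts, and when $v\notin B$ one deletes.

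For the contraction piece, every pair $(B,T)$ with $v\in B$ has $v\in T$, and the map $(B,T)\mapsto(B\setminus\{v\},T\setminus\{v\})$ gives a bijection with the pairs in $\mathcal{B}_2$. The multiplicities $\mu(T)$ transport to $\mu_2(T\setminus\{v\})$ because $m_2(A)=m(A\cup\{v\})$ for $A\subseteq X_2$. Externally active elements on $B$ distinct from $v$ remain externally active on $B\setminus\{v\}$ in $X_2$; dually, $v\notin B^c$, so the same transportation applies to $\widetilde{T}$ and to the $e^*$--statistic. By the construction of $\psi_B$, the restriction of $\psi$ to these pairs coincides with the matching built in $X_2$, so the exponents of $x$ and $y$ agree and the first summand equals $\overline{M}_{X_2}(x,y)$.

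For the deletion piece, the bases $B$ with $v\notin B$ are exactly the bases of $X_1$. I would group the pairs $(B,T)$ and $(B,T\cup\{v\})$ (for $T\supseteq B$ with $v\notin T$), whose multiplicities add up to $\mu_1(T)$ by the telescoping identity $\mu(S)+\mu(S\cup\{v\})=\mu_1(S)$, exactly as in the molecular case. The external activities of elements other than $v$ are unchanged, and the construction of $\psi_B$ begins here with a deletion, so $\psi$ restricted to these pairs coincides with the matching built in $X_1$; symmetrically on the dual, $v\in B^c$ and contraction on the dual side handles the $e^*$--statistic. Hence the second summand equals $\overline{M}_{X_1}(x,y)$.

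The main obstacle will be the bookkeeping of the contribution of $v$ itself to the local activities $e(B,T)$ and $e^*(\psi(B,T))$. The key point is that the convention in the definition of $\psi_B$ (identifying pairs that differ only in non-active elements) effectively discards $v$ in the split above: when $v\notin B$, $v$ is either non-active on $B$ or is carried into the contraction on the dual side by the algorithm, and symmetrically when $v\in B$. Verifying that this ``ignoring'' is consistent with the telescoping of $\mu$ and $\mu^*$, and that the choice of $v$ as the greatest proper vector makes the activity status of all other elements the same before and after deletion and contraction, is the technical heart of the argument; once it is in place, combining the two identities yields the claimed recursion for $\overline{M}_X$.
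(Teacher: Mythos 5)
Your proposal follows the same route as the paper: split $\overline{M}_X$ over the pairs $(B,T)\in\mathcal{B}$ according to whether $v\in B$, identify the $v\in B$ part with $\overline{M}_{X_2}$ and the $v\notin B$ part with $\overline{M}_{X_1}$ via $(B,T)\mapsto(B\setminus\{v\},T\setminus\{v\})$ (using $\mu_2(S)=\mu(S\cup\{v\})$) and $\mu(T)+\mu(T\cup\{v\})=\mu_1(T)$ respectively, and observe that the recursive construction of $\psi_B$ in Lemma \ref{lem:equi} literally begins by doing the very deletion or contraction at $v$, so the restriction of $\psi$ matches $\psi_1$ and $\psi_2$ on the nose. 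This is essentially identical to the paper's proof; the only point both treatments leave slightly implicit is the small verification that $v$, being the greatest \emph{proper} vector, is never externally active (the elements of $B$ above $v$ must be free, and a non-torsion vector is always independent of a set of free vectors).
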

\begin{proof}
We have
\begin{eqnarray*}
\overline{M}_X(x,y) & = & \sum_{(B,T)\in
\mathcal{B}}x^{e^*(\psi(B,T))}y^{e(B,T)}= \sum_{T\in L_X}
\mathop{\sum_{B\subseteq T}}_{B\text{ basis}}x^{e^*(\psi(B,T))}y^{e(B,T)}\\
 & = & \sum_{T\in L_X}
\mathop{\sum_{B\subseteq T}}_{v\notin B\text{
basis}}x^{e^*(\psi(B,T))}y^{e(B,T)}+\sum_{T\in L_X}
\mathop{\sum_{B\subseteq T}}_{v\in B\text{
basis}}x^{e^*(\psi(B,T))}y^{e(B,T)}.
\end{eqnarray*}
In the first summand we take all the bases not containing $v$, and
when we compute the statistics, since $v$ is the greatest vector,
it does not act externally on these bases. Moreover, in the dual
it is contained in every basis $B^c$, so it does not act
externally on them too. Also, recall from the proof of Lemma
\ref{lem:equi} that, for the bases involved in this summand, the
bijections $\psi_B$ of which $\psi$ is made up of correspond
exactly to the bijections in the deletion of $v$. Therefore this
summand is the same as the polynomial of the deletion
$\overline{M}_{X_1}(x,y)$.

For the second summand, since $v$ is proper also in the dual, we
can make the dual reasoning, getting that this is the polynomial
$\overline{M}_{X_2}(x,y)$ of the contraction by $v$. This
completes the proof.
\end{proof}
We can now prove Theorem \ref{thm:generalcase}.
\begin{proof}[proof of Theorem \ref{thm:generalcase}]
Fix an order on the vectors $X$. Applying iteratively this last
recurrence to the greatest proper vectors, we can reduce the
problem to the molecular case.

But this case is the content of Theorem \ref{thm:specialcase}.
\end{proof}

\section{A remark on log-concavity}

We recall some well-known definitions. A sequence of positive
integers $\{a_m\}$ is
\begin{itemize}
  \item \emph{unimodal} if $a_1\leq\dots\leq a_{k-1}\leq \ a_k\geq
  a_{k+1}\geq a_m$ for some $k$;
  \item \emph{log-concave} if $a_k^2\geq a_{k-1}a_{k+1}$ for every $k$.
\end{itemize}
It is easy to see that log-concavity implies unimodality. We say
that a polynomial in one variable is log-concave (resp. unimodal)
if the sequence of (the absolute values of) its coefficients is.
Log-concavity problems are widely studied in combinatorics: the
reader can refer to the surveys \cite{StL1}, \cite{BrLC},
\cite{StL2}.

In the early '70s Rota, Heron and Welsh (\cite{Rot}, \cite{Her},
\cite{Wel}) conjectured that the characteristic polynomial of a
hyperplane arrangement is log-concave. Recently, a proof has been
proposed in \cite{Huh}.

Another famous conjecture is the following one. Let $\mathfrak{M}$
be a matroid on a list $X$, and $i_k$ be the number of its
independent sublists of rank $k$. In \cite{Ma} Mason conjectured
that the sequence $\{i_k\}$ is log-concave. Mason's conjecture has
been recently proved by Matthias Lenz in \cite{Le2}.

Notice that the above statements can be rephrased in terms of the
Tutte polynomial. Namely the Rota-Heron-Welsh conjecture claims
that for any ($0$-representable) matroid $\mathfrak{M}$,
$T(\mathfrak{M};1-q,0)$ is log-concave. On the other hand, Mason's
conjecture claims that $T(\mathfrak{M};q+1,1)$ is log-concave.

The following example, which has been suggested to us by Matthias
Lenz, shows that the corresponding evaluations of the arithmetic
Tutte polynomial are not log-concave. Notice that by Theorem
\ref{thm:Luca}, this implies that the characteristic polynomial of
a toric arrangement is not necessarily log-concave.
\begin{Ex}
Let $X:=\{(1,0,0,0),(0,1,0,0),(0,0,1,0),(1,1,1,5)\}\subseteq
\mathbb{Z}^4$. Then
$$
M_X(1-q,0)=5-4q+6q^2-4q^3+q^4
$$
and
$$
M_X(1+q,1)=5+4q+6q^2+4q^3+q^4
$$
are not unimodal.
\end{Ex}

\end{document}